\definecolor{blue}{RGB}{0, 0, 200}
\definecolor{pink}{RGB}{252, 0, 50}
\newtheorem{theorem}{Theorem}[section]
\newtheorem{corollary}[theorem]{Corollary}
\newtheorem{lemma}[theorem]{Lemma}
\theoremstyle{definition}
\newtheorem{definition}[theorem]{Definition}
\numberwithin{equation}{section}
\begin{document}
\title[The pod function and its connection with other partition functions]{The pod function and its connection with other partition functions}
\author[Hemjyoti Nath]{Hemjyoti Nath}

\maketitle

\let\thefootnote\relax
\footnotetext{$2020$ \textit{Mathematics subject classification.} 11P81, 11P83.}
\footnotetext{\textit{Key words and phrases}. integer partitions, recurrence relations.}

\begin{abstract}
The number of partitions of $n$ wherein odd parts are distinct and even parts are unrestricted, often denoted by $pod(n)$. In this paper, we provide linear recurrence relations for $pod(n)$, and the connections of $pod(n)$ with other partition functions.
\end{abstract}

\section{Introduction}
A partition of a positive integer $n$ is a non-increasing sequence of positive integers, called parts, whose sum equals $n$. For example, $n=4$ has five partitions, namely, 
\begin{equation*}
    4, \quad 3 + 1, \quad 2+2, \quad 2+1+1, \quad 1 + 1 + 1 + 1.
\end{equation*}
If $p(n)$ denotes the number of partitions of $n$, then $p(4)=5$. The generating function for $p(n)$ is given by
\begin{equation*}
    \sum_{n=0}^{\infty}p(n)q^n = \frac{1}{(q;q)_{\infty}},
\end{equation*}
where, here and throughout the paper
\begin{equation*}
    (a;q)_{\infty} = \prod_{n=0}^{\infty}(1-aq^n), \quad |q|<1.
\end{equation*}

In $2010$, Hirschhorn and Sellers \cite{9} defined the partition function $pod(n)$, which counts the number of partitions of $n$ into distinct odd parts where even parts maybe repeated. For example, $pod(4)=3$ with the relevent partitions being
\begin{equation*}
    4, \quad 3+1, \quad 2+2.
\end{equation*}
The generating function for $pod(n)$ is given by 
\begin{equation*}
    \sum_{n=0}^{\infty}pod(n)q^n = \frac{(-q;q^2)_{\infty}}{(q^2;q^2)_{\infty}} = \frac{(q^2;q^4)_{\infty}}{(q;q)_{\infty}} = \frac{(q^2;q^2)_{\infty}}{(q;q)_{\infty}(q^4;q^4)_{\infty}}.
\end{equation*}
Hirschhorn and Sellers proved the Ramanujan-type congruences
\begin{equation*}
    pod\left(3^{2\alpha+3}n + \frac{23\times 3^{2\alpha+2}+1}{8}\right) \equiv 0 \pmod{3}, \quad \text{for all} \quad \alpha\geq0 , \hspace{2mm} n\geq 0
\end{equation*}
using some $q$-series identities. Radu and Sellers \cite{10} later found congruences for $pod(n)$ modulo $5$ and $7$ using the theory of modular forms.

For nonnegative integers $n$ and $k$, let $r_k(n)$ (resp. $t_k(n)$) denote the number of representations of $n$ as sum of $k$ squares (resp. triangular numbers). In 2011, based on the generating function of $pod(3n+2)$ found in \cite{9}, Lovejoy and Osburn discovered the following arithmetic relation
\begin{equation*}
    pod(3n + 2) \equiv (-1)^n r_5(8n + 5) \pmod{3}.
\end{equation*}
Recently, Ballantine and Merca \cite{13}, obtained new properties for $pod(n)$ using the connections with $4$-regular partitions and, for fixed $k \in \{0,2\}$, partitions into distinct parts not congruent to $k$ modulo $4$.

In a very recent work, Ballantine and Welch \cite{12} proved a recurrence relation for $pod(n)$ combinatorially.

Motivated from their work, we establish new recurrence relations for $pod(n)$ that involve triangular numbers and generalized pentagonal numbers i.e.,
\begin{equation*}
    T_k=\frac{k(k+1)}{2}, \quad k\in \mathbb{N}_0
\end{equation*}
and
\begin{equation*}
    G_k = \frac{1}{2} \left\lceil \frac{k}{2} \right\rceil \left\lceil \frac{3k+1}{2} \right\rceil, \quad k\in \mathbb{N}_0
\end{equation*}
respectively, and its connections with other partition functions.

\section{Preliminaries}
We require the following definitions, lemmas and theorems to prove the main results in the next two sections. The Jacobi triple product identity \cite{9},  is given by
\begin{equation}\label{e0.1}
    (q^2;q^2)_{\infty}(-qx;q^2)_{\infty}(-q/x;q^2)_{\infty} =\sum_{n=-\infty}^{\infty}x^nq^{n^2}, \qquad |q|<1, \quad x \neq 0.
\end{equation}
For $|qx| <1$, Ramanujan's general theta function $f(q,x)$ is defined as 
\begin{equation}\label{e2.0.0}
    f(q,x) : = \sum_{n=-\infty}^{\infty}q^{n(n+1)/2}x^{n(n-1)/2}.
\end{equation}
Using $\eqref{e0.1}$, $\eqref{e2.0.0}$ takes the shape
\begin{equation}\label{e2.0.1}
    f(q,x)=(-q,qx)_{\infty}(-x,qx)_{\infty}(qx,qx)_{\infty}.
\end{equation}
The special cases of $f(q,x)$ are
\begin{equation}\label{e2.0.2}
    \varphi(q):=f(q,q)=\sum_{n=-\infty}^{\infty}q^{n^2}=(-q;q^2)_{\infty}^2(q^2;q^2)_{\infty}=\frac{(q^2;q^2)_{\infty}^5}{(q;q)_{\infty}^2(q^4;q^4)_{\infty}^2},
\end{equation}
\begin{equation}\label{e2.0.3}
    \psi(q):=f(q,q^3)=\sum_{n=0}^{\infty}q^{n(n+1)/2}=\frac{(q^2;q^2)_{\infty}}{(q;q^2)_{\infty}} = \frac{(q^2;q^2)_{\infty}^2}{(q;q)_{\infty}},
\end{equation}
\begin{equation}\label{e2.0.4}
    \varphi(-q):=f(-q,-q)=\sum_{n=-\infty}^{\infty}(-1)^nq^{n^2}=\frac{(q;q)_{\infty}^2}{(q^2;q^2)_{\infty}},
\end{equation}
\begin{equation}\label{e2.0.3.1}
    \psi(-q):=f(-q,-q^3)=\sum_{n=0}^{\infty}(-1)^{n(n+1)/2}q^{n(n+1)/2}=\frac{(q^2;q^2)_{\infty}}{(-q;q^2)_{\infty}} = \frac{(q^2;q^2)_{\infty}^2}{(-q;-q)_{\infty}}.
\end{equation}
\begin{theorem}[Euler's Pentagonal Number Theorem \cite{9}]
We have
    \begin{equation}\label{e2.0.3.2}
    \sum_{n=-\infty}^{\infty}(-1)^n q^{n(3n+1)/2}=(q;q)_{\infty}.
\end{equation}
\end{theorem}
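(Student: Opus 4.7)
The plan is to deduce Euler's pentagonal number theorem as a single specialization of the Jacobi triple product identity \eqref{e0.1}, which is already available. The strategy is to choose values of $q$ and $x$ in \eqref{e0.1} so that the quadratic exponents $n^2$ on the right-hand side are shifted into the pentagonal exponents $n(3n+1)/2$ and the sign $(-1)^n$ appears, while simultaneously the three infinite products on the left-hand side collapse into $(q;q)_{\infty}$.

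First I would apply the substitution $q\mapsto q^{3/2}$, $x\mapsto -q^{-1/2}$ to \eqref{e0.1}. On the right-hand side the general term becomes $(-1)^n q^{(3n^2-n)/2}$, so after the reindexing $n\mapsto -n$ the sum equals $\sum_{n\in\mathbb{Z}}(-1)^n q^{n(3n+1)/2}$, which is the series appearing on the left of \eqref{e2.0.3.2}. On the left-hand side of \eqref{e0.1} the two theta factors $(-qx;q^2)_{\infty}$ and $(-q/x;q^2)_{\infty}$ simplify, after the half-integer powers of $q$ cancel, to $(q;q^3)_{\infty}$ and $(q^2;q^3)_{\infty}$, while the prefactor $(q^2;q^2)_{\infty}$ becomes $(q^3;q^3)_{\infty}$.

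Once the left-hand side is in this form, the argument finishes by the elementary observation that grouping the factors of $(q;q)_{\infty}=\prod_{k\geq 1}(1-q^k)$ according to the residue of $k$ modulo $3$ gives exactly $(q;q^3)_{\infty}(q^2;q^3)_{\infty}(q^3;q^3)_{\infty}$, so the product on the right of \eqref{e2.0.3.2} is recovered.

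The main point requiring care, and the only candidate for a real obstacle, is the transient appearance of the half-integer powers $q^{\pm 1/2}$ during the substitution; one has to check that they cancel cleanly on both sides, leaving only integer powers. If one prefers to avoid fractional exponents altogether, the same manipulation can be carried out formally by first renaming the base variable in \eqref{e0.1}, after which everything reduces to a routine bookkeeping with integer powers and no further subtlety.
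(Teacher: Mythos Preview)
Your derivation is correct: the substitution $q\mapsto q^{3/2}$, $x\mapsto -q^{-1/2}$ in \eqref{e0.1} turns the right-hand side into $\sum_{n\in\mathbb{Z}}(-1)^n q^{(3n^2-n)/2}$, which after $n\mapsto -n$ is the pentagonal sum, and the left-hand side becomes $(q^3;q^3)_{\infty}(q;q^3)_{\infty}(q^2;q^3)_{\infty}=(q;q)_{\infty}$ upon splitting the product by residues modulo~$3$. Your remark about the half-integer powers is also apt; they cancel exactly as you describe, and the cleaner alternative is to start by writing \eqref{e0.1} with base variable $t$ and then set $t=q^{3/2}$ only at the end, or equivalently to quote the triple product in the form $(-a;ab)_{\infty}(-b;ab)_{\infty}(ab;ab)_{\infty}=\sum_{n}a^{n(n+1)/2}b^{n(n-1)/2}$ with $a=-q$, $b=-q^{2}$.

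As for comparison with the paper: there is nothing to compare. The paper does not prove this statement at all; it is listed in the Preliminaries as a classical result with a reference to Berndt's book, alongside Jacobi's identity and Ramanujan's theta-function identities, and is simply used later as input. Your proposal therefore supplies more than the paper does, and the route you chose---specializing the Jacobi triple product---is the standard textbook argument for this identity.
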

\begin{theorem}[Jacobi's Identity \cite{9}]
We have
    \begin{equation}\label{e2.0.3.3}
    \sum_{n=0}^{\infty}(-1)^n(2n+1)q^{n(n+1)/2}=(q;q)_{\infty}^3.
\end{equation}
\end{theorem}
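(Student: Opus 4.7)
The plan is to derive Jacobi's identity from the Jacobi triple product \eqref{e0.1} via a careful limiting argument. First I would replace $x$ by $-x$ in \eqref{e0.1} to obtain the sign-alternating version
\begin{equation*}
(q^2;q^2)_{\infty}(qx;q^2)_{\infty}(q/x;q^2)_{\infty} = \sum_{n=-\infty}^{\infty}(-1)^n x^n q^{n^2},
\end{equation*}
and then substitute $x \mapsto qx$ to shift the exponent to a triangular one, arriving at
\begin{equation*}
(q^2;q^2)_{\infty}(q^2 x;q^2)_{\infty}(1/x;q^2)_{\infty} = \sum_{n=-\infty}^{\infty}(-1)^n x^n q^{n(n+1)}.
\end{equation*}

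Next I would extract the vanishing factor $1-x^{-1}$ via $(1/x;q^2)_{\infty} = (1-x^{-1})(q^2/x;q^2)_{\infty}$, so that
\begin{equation*}
(1-x^{-1})(q^2;q^2)_{\infty}(q^2 x;q^2)_{\infty}(q^2/x;q^2)_{\infty} = \sum_{n=-\infty}^{\infty}(-1)^n x^n q^{n(n+1)}.
\end{equation*}
The critical observation is that the right-hand side \emph{also} vanishes at $x=1$: pairing the summation index $n$ with $-1-n$ shows that $(-1)^n q^{n(n+1)}$ and $(-1)^{-1-n} q^{(-1-n)(-n)}$ cancel. Both sides being divisible by $1-x^{-1}$, I would divide through and let $x \to 1$. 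Applying L'H\^opital's rule (the derivative of $1-x^{-1}$ at $x=1$ equals $1$, while term-by-term differentiation of the absolutely convergent series is justified for $|q|<1$) yields
\begin{equation*}
(q^2;q^2)_{\infty}^3 = \sum_{n=-\infty}^{\infty}(-1)^n n \, q^{n(n+1)}.
\end{equation*}

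Applying the same pairing $n \leftrightarrow -1-n$ a second time to this bilateral sum collapses it to a unilateral series in which the coefficient of $q^{n(n+1)}$ becomes $(-1)^n[n-(-1-n)] = (-1)^n(2n+1)$, giving
\begin{equation*}
(q^2;q^2)_{\infty}^3 = \sum_{n=0}^{\infty}(-1)^n(2n+1)\,q^{n(n+1)};
\end{equation*}
replacing $q$ by $q^{1/2}$ then produces the stated form \eqref{e2.0.3.3}. The main delicate point is the limit step: one must verify the vanishing of the right-hand side at $x=1$ so that L'H\^opital is legitimate and identify the resulting derivative correctly. Everything else is routine bookkeeping with $q$-products, and the two symmetric $n \leftrightarrow -1-n$ pairings are the structural heart of the argument.
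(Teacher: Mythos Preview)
Your derivation is correct and is in fact the classical route to Jacobi's identity via the triple product: the substitution $x\mapsto -qx$, extraction of the vanishing factor $1-x^{-1}$, the $n\leftrightarrow -1-n$ cancellation at $x=1$, the L'H\^opital limit, and the second pairing to collapse the bilateral sum all check out. One small remark: after dividing by $1-x^{-1}$ you have an identity of analytic functions on a punctured neighbourhood of $x=1$, so the limit can equally well be computed by continuity of the product side together with the derivative of the series side; phrasing it as L'H\^opital is fine but not strictly necessary.

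As to comparison with the paper: there is nothing to compare. The paper states Jacobi's identity in the Preliminaries section with a citation to \cite{9} and does not supply any proof of its own. So your proposal goes well beyond what the paper does---it actually proves the result, whereas the paper simply quotes it as a known lemma for later use.
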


\begin{lemma}[Hirschhorn \cite{2}]
    We have the following identities due to Ramanujan,
 \begin{equation}\label{e2.1.1}
         \frac{(q;q)_{\infty}^5}{(q^2;q^2)_{\infty}^2} = \sum_{n=-\infty}^{\infty}(6n+1)q^{n(3n+1)/2},
\end{equation}
\begin{equation}\label{e2.1.2}
        \frac{(q;q)_{\infty}^2(q^4;q^4)_{\infty}^2}{(q^2;q^2)_{\infty}} = \sum_{n=-\infty}^{\infty}(3n+1)q^{3n^2+2n}.
    \end{equation}
\end{lemma}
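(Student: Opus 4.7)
The plan is to prove both identities by the classical device of introducing an auxiliary parameter into a Jacobi triple product expansion, differentiating at a common zero of both sides, and collapsing the surviving infinite product, following the approach of Hirschhorn in the cited monograph.

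For identity \eqref{e2.1.1}, the natural tool is the quintuple product identity
\[
\prod_{n\geq 1}(1-q^n)(1-zq^n)(1-z^{-1}q^{n-1})(1-z^2q^{2n-1})(1-z^{-2}q^{2n-1})=\sum_{n=-\infty}^{\infty}(z^{3n}-z^{-3n-1})q^{n(3n+1)/2},
\]
which can itself be derived from two applications of \eqref{e0.1} together with \eqref{e2.0.3.2}. Both sides vanish at $z=1$: the left because the factor $(1-z^{-1})$ appears as the $n=1$ term of the third product, and the right term by term since each coefficient $z^{3n}-z^{-3n-1}$ becomes $1-1=0$. Applying $\frac{d}{dz}$ and evaluating at $z=1$, the right-hand side becomes $\sum_n\bigl(3n+(3n+1)\bigr)q^{n(3n+1)/2}=\sum_n(6n+1)q^{n(3n+1)/2}$, while on the left only the derivative of the vanishing factor contributes, so the remaining product at $z=1$ equals $(q;q)_\infty^3(q;q^2)_\infty^2=(q;q)_\infty^5/(q^2;q^2)_\infty^2$, using $(q;q^2)_\infty=(q;q)_\infty/(q^2;q^2)_\infty$.

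For identity \eqref{e2.1.2}, the starting point is the specialization $q\to q^3$, $x\to xq^2$ in \eqref{e0.1}, which yields
\[
\sum_{n=-\infty}^{\infty}x^n q^{3n^2+2n}=(q^6;q^6)_\infty(-xq^5;q^6)_\infty(-q/x;q^6)_\infty.
\]
Introducing an appropriate auxiliary parameter and repeating the derivative-at-a-zero argument (equivalently, applying a quintuple-product-style identity in base $q^3$) produces the weight $3n+1$ on the right. The surviving product on the left must then be simplified using the standard dissections $(-q;q^2)_\infty=(-q;q^6)_\infty(-q^3;q^6)_\infty(-q^5;q^6)_\infty$ and $(q^2;q^2)_\infty=(q;q)_\infty(-q;q)_\infty$, together with the base-$q^4$ and base-$q^6$ analogues, to arrive at the claimed form $(q;q)_\infty^2(q^4;q^4)_\infty^2/(q^2;q^2)_\infty$.

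The main obstacle is the product-side bookkeeping in \eqref{e2.1.2}: locating the specific parameter specialization that produces the weight $3n+1$ while collapsing the infinite products to the target ratio is more delicate than in \eqref{e2.1.1}, since the natural underlying identity lives in base $q^3$ rather than base $q$. The calculation for \eqref{e2.1.1} is essentially routine once the quintuple product is in hand, so the principal work is in executing the analogous but technically more involved manipulation for \eqref{e2.1.2}; since the lemma is attributed to Hirschhorn, in a working paper one may simply invoke the detailed derivations in his monograph, with the outline above confirming that both identities sit squarely within the Jacobi-triple-product toolbox assembled in the Preliminaries.
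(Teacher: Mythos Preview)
The paper does not prove this lemma at all: it is stated in the Preliminaries with a bare citation to Hirschhorn's monograph \cite{2}, and the identities are used later as black boxes. So there is no ``paper's own proof'' to compare against, and your concluding remark---that in practice one simply invokes the monograph---is precisely what the paper does.

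That said, your sketch is uneven. For \eqref{e2.1.1} the argument is essentially correct and is indeed the standard route: differentiate the quintuple product identity at $z=1$, where the factor $(1-z^{-1})$ kills the product side and each summand $z^{3n}-z^{-3n-1}$ vanishes on the series side, leaving exactly $(6n+1)q^{n(3n+1)/2}$ against $(q;q)_\infty^3(q;q^2)_\infty^2=(q;q)_\infty^5/(q^2;q^2)_\infty^2$. This part would stand as a proof.

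For \eqref{e2.1.2}, however, what you have written is not a proof but a declaration that a proof exists. You never name the ``appropriate auxiliary parameter,'' never identify which theta identity is being differentiated, and never show why the surviving product collapses to $(q;q)_\infty^2(q^4;q^4)_\infty^2/(q^2;q^2)_\infty$; you explicitly flag this as ``the main obstacle'' and leave it unresolved. Gesturing at ``a quintuple-product-style identity in base $q^3$'' and listing some product dissections does not bridge the gap. If you intend to supply an actual proof rather than a citation, you must exhibit the specific one-parameter identity (for instance, a suitable specialization of the quintuple product or a direct differentiation of a Jacobi triple product in base $q^6$), carry out the differentiation, and verify the product simplification line by line. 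As written, the \eqref{e2.1.2} portion is a gap.
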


\begin{lemma}[Baruah \cite{3}]
We have
 \begin{equation}\label{e2.1.3}
    \sum_{n=-\infty}^{\infty}q^{n(3n+1)/2}=\frac{(-q;q)_{\infty}(q^3;q^3)_{\infty}}{(-q^3;q^3)_{\infty}}.
\end{equation}
\end{lemma}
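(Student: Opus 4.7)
The plan is to recognize the left-hand side as a special value of Ramanujan's general theta function $f(q,x)$ from equation \eqref{e2.0.0}, and then simplify the resulting infinite product by splitting an odd-residue product according to residues modulo $3$.

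First I would symmetrize the summation index. Replacing $n$ by $-n$ in the sum yields
\begin{equation*}
    \sum_{n=-\infty}^{\infty}q^{n(3n+1)/2}=\sum_{n=-\infty}^{\infty}q^{n(3n-1)/2},
\end{equation*}
so it suffices to evaluate the right-hand sum. The exponent $n(3n-1)/2$ can be decomposed as $\frac{n(n+1)}{2}+2\cdot\frac{n(n-1)}{2}$, which matches the definition \eqref{e2.0.0} with $x=q^{2}$. Hence
\begin{equation*}
    \sum_{n=-\infty}^{\infty}q^{n(3n-1)/2}=f(q,q^{2}).
\end{equation*}

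Next I would apply the product representation \eqref{e2.0.1} of $f(q,x)$ with $x=q^{2}$ to obtain
\begin{equation*}
    f(q,q^{2})=(-q;q^{3})_{\infty}(-q^{2};q^{3})_{\infty}(q^{3};q^{3})_{\infty}.
\end{equation*}
The only remaining step is to show that $(-q;q^{3})_{\infty}(-q^{2};q^{3})_{\infty}=(-q;q)_{\infty}/(-q^{3};q^{3})_{\infty}$. This follows from the elementary partition of the positive integers according to residues modulo $3$: grouping the factors $(1+q^{k})$ of $(-q;q)_{\infty}$ by the residue class of $k$ gives the factorization
\begin{equation*}
    (-q;q)_{\infty}=(-q;q^{3})_{\infty}(-q^{2};q^{3})_{\infty}(-q^{3};q^{3})_{\infty},
\end{equation*}
after which one divides by $(-q^{3};q^{3})_{\infty}$.

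Combining the three displayed identities yields the desired conclusion. No serious obstacle is anticipated; the only subtle point is the initial observation that the sum is invariant under $n\mapsto -n$, which is exactly what lets us recast $n(3n+1)/2$ in the form compatible with $f(q,q^{2})$. Once that reindexing is in place, the remainder is a direct application of \eqref{e2.0.1} and the mod-$3$ product decomposition.
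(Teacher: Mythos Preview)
Your argument is correct: the reindexing $n\mapsto -n$, the identification $\sum_{n}q^{n(3n-1)/2}=f(q,q^{2})$ via the decomposition $n(3n-1)/2=\tfrac{n(n+1)}{2}+2\cdot\tfrac{n(n-1)}{2}$, the product form from \eqref{e2.0.1}, and the mod-$3$ splitting $(-q;q)_{\infty}=(-q;q^{3})_{\infty}(-q^{2};q^{3})_{\infty}(-q^{3};q^{3})_{\infty}$ all check out.

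Note, however, that the paper does not supply its own proof of this lemma; it is listed in the Preliminaries section as a quoted result from Baruah~\cite{3}. So there is no ``paper's proof'' to compare against. Your derivation is the standard one and would be an appropriate self-contained justification were one required.
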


\section{Recurrence relations}
In this section, we prove some recurrence relations for the partition function $pod(n)$ that involve $T_k$ and $G_k$ for $k\in \mathbb{N}_0$.

\begin{theorem}
    For $n\geq 0$, we have
    \begin{equation*}
        \sum_{j=0}^{\infty}(-1)^{\lceil j/2 \rceil}pod(n-T_j)=\begin{cases}
            1, \quad if \hspace{1mm} n=0, \\
            0, \quad otherwise.
        \end{cases}
    \end{equation*}
\end{theorem}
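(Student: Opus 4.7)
The plan is to recast the claim as an identity of generating functions. Let $P(q) = \sum_{n\ge 0} pod(n)\, q^n = (q^2;q^2)_\infty / \bigl[(q;q)_\infty (q^4;q^4)_\infty\bigr]$. Since the left-hand side of the theorem is precisely the coefficient of $q^n$ in $\bigl(\sum_{j\ge 0}(-1)^{\lceil j/2\rceil} q^{T_j}\bigr)\, P(q)$, the statement is equivalent to the single theta-type identity
\begin{equation*}
\sum_{j=0}^{\infty} (-1)^{\lceil j/2 \rceil}\, q^{T_j} \;=\; \frac{(q;q)_\infty\,(q^4;q^4)_\infty}{(q^2;q^2)_\infty}.
\end{equation*}

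To evaluate the left side I would split the series according to the parity of $j$. The sign pattern $+,-,-,+,+,-,-,\ldots$ shows that both $j = 2k$ and $j = 2k - 1$ carry the same sign $(-1)^k$, with exponents $T_{2k} = 2k^2 + k$ and $T_{2k-1} = 2k^2 - k$ respectively. Sending $k \mapsto -k$ in the odd-index contribution converts $(-1)^k q^{2k^2 - k}$ (for $k \ge 1$) into $(-1)^k q^{2k^2 + k}$ (for $k \le -1$), and fusing the two halves yields the bilateral theta series
\begin{equation*}
\sum_{j=0}^{\infty} (-1)^{\lceil j/2 \rceil}\, q^{T_j} \;=\; \sum_{k=-\infty}^{\infty} (-1)^k\, q^{2k^2 + k}.
\end{equation*}

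Next I would apply the Jacobi triple product \eqref{e0.1} with $q \mapsto q^2$ and $x \mapsto -q$, which evaluates this bilateral sum to $(q^4;q^4)_\infty\,(q;q^4)_\infty\,(q^3;q^4)_\infty$. A short simplification using the mod-$4$ factorization $(q;q)_\infty = (q;q^4)_\infty (q^2;q^4)_\infty (q^3;q^4)_\infty (q^4;q^4)_\infty$ together with $(q^2;q^4)_\infty = (q^2;q^2)_\infty / (q^4;q^4)_\infty$ then collapses the product to $(q;q)_\infty (q^4;q^4)_\infty / (q^2;q^2)_\infty$, which is exactly $1/P(q)$, as required. The only delicate step is the parity bookkeeping needed to fold the one-sided sum into a bilateral theta series; once that is in place, the Jacobi triple product does the rest.
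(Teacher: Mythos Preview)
Your proof is correct and follows essentially the same route as the paper: both apply the Jacobi triple product with $q\mapsto q^2$, $x\mapsto -q$ to identify the theta sum $\sum_{k\in\mathbb{Z}}(-1)^k q^{2k^2+k}$ with $(q^4;q^4)_\infty(q;q^4)_\infty(q^3;q^4)_\infty = (q;q)_\infty(q^4;q^4)_\infty/(q^2;q^2)_\infty$, and then recognize this product as the reciprocal of the $pod$ generating function. If anything, your version is more careful in spelling out the parity split that folds the one-sided sum $\sum_{j\ge 0}(-1)^{\lceil j/2\rceil}q^{T_j}$ into the bilateral series, a step the paper passes over silently.
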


\begin{proof}
Replacing $q$ by $q^2$ and $x$ by $-q$ in $\eqref{e0.1}$, we have the relation
    \begin{equation}\label{e1}
        (q^4;q^4)_{\infty}(q^3;q^4)_{\infty}(q;q^4)_{\infty}= \sum_{n=-\infty}^{\infty}(-1)^nq^{2n^2+n},
            \end{equation}
Multiplying the above equation by $(q^2;q^2)_{\infty}/(q;q)_{\infty}(q^4;q^4)_{\infty}$, we get
\begin{equation}\label{e1.1.1}
   1=\frac{(q^2;q^2)_{\infty}}{(q;q)_{\infty}(q^4;q^4)_{\infty}}\sum_{n=0}^{\infty}(-1)^{\lceil n/2 \rceil}q^{n(n+1)/2}.
\end{equation}
This can be written as
\begin{equation*}
    \left( \sum_{n=0}^{\infty}pod(n)q^n \right)\left(\sum_{n=0}^{\infty}(-1)^{\lceil n/2 \rceil}q^{n(n+1)/2}\right)=1.
\end{equation*}
The proof follows easily applying the well known Cauchy multiplication of two power series on the left hand side of the above equation.
\end{proof}

\begin{corollary}
    For $n\geq 0$, we have
    \begin{equation*}
       \sum_{j=0}^{\infty}pod(n-T_j) \equiv  \begin{cases}
            1 \pmod{2}, \quad \text{if} \hspace{2.5mm} n=0,\\
            0 \pmod{2}, \quad \text{otherwise}.
        \end{cases}
        \end{equation*}
\end{corollary}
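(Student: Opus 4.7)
The plan is to obtain this corollary as an immediate modulo $2$ consequence of the preceding theorem. Theorem~3.1 asserts that
\[
\sum_{j=0}^{\infty}(-1)^{\lceil j/2 \rceil}pod(n-T_j)=\begin{cases}1, & n=0,\\ 0, & \text{otherwise.}\end{cases}
\]
Since $(-1)^{\lceil j/2 \rceil}\equiv 1 \pmod{2}$ for every $j\geq 0$, reducing the identity modulo $2$ collapses all alternating signs to $+1$, and the right-hand side is preserved. This immediately yields the stated congruence. So the only step is an observation about signs, with no calculation to perform.

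As a cross-check, one can alternatively derive the statement directly from generating functions: multiplying the generating function of $pod(n)$ by $\psi(q)=\sum_{j\ge 0} q^{T_j}=(q^2;q^2)_\infty^2/(q;q)_\infty$ (see \eqref{e2.0.3}) gives
\[
\left(\sum_{n=0}^{\infty}pod(n)q^n\right)\psi(q)=\frac{(q^2;q^2)_\infty^3}{(q;q)_\infty^2(q^4;q^4)_\infty}.
\]
Using the standard mod~$2$ identity $(q;q)_\infty^2\equiv (q^2;q^2)_\infty\pmod 2$, the right-hand side reduces to $(q^2;q^2)_\infty^2/(q^4;q^4)_\infty\equiv 1\pmod 2$, recovering the corollary. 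Since the theorem is already proved, the first route is cleaner, and there is no real obstacle to overcome.
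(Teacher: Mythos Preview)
Your proposal is correct and matches the paper's approach exactly: the paper simply states that the corollary is immediate from the preceding theorem, which is precisely your first argument of reducing Theorem~3.1 modulo $2$ using $(-1)^{\lceil j/2\rceil}\equiv 1\pmod{2}$. Your generating-function cross-check is a nice extra but unnecessary for the write-up.
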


The proof is immediate from the above theorem.

\begin{theorem}
    For $n\geq 0$, we have
    \begin{equation*}
        \sum_{j=0}^{\infty}(-1)^j (2j+1) pod(n-2T_j)=\begin{cases}
            \displaystyle\sum_{j=0}^{\infty}(-1)^{\lceil k/2 \rceil + \lceil j/2 \rceil + G_k - 2G_j}, \quad if \hspace{1mm} n=G_k, \\
            0, \quad otherwise.
        \end{cases}
    \end{equation*}
\end{theorem}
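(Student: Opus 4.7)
The plan is to mirror the generating-function strategy used in Theorem 3.1: translate the recurrence to a $q$-series identity and extract coefficients via Cauchy multiplication. First I would apply Jacobi's identity \eqref{e2.0.3.3} with $q$ replaced by $q^2$; since $n(n+1)=2T_n$, this gives $(q^2;q^2)_\infty^3 = \sum_{j\ge 0}(-1)^j(2j+1)q^{2T_j}$. Multiplying by the $pod$ generating function identifies the left-hand side with the coefficient of $q^n$ in
\[
\frac{(q^2;q^2)_\infty^4}{(q;q)_\infty(q^4;q^4)_\infty}.
\]

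The heart of the proof is recognizing this quotient as a product of two theta factors whose exponents land on generalized pentagonal numbers. Using the Pochhammer splittings $(q;q)_\infty=(q;q^2)_\infty(q^2;q^2)_\infty$ and $(q^2;q^2)_\infty=(q^2;q^4)_\infty(q^4;q^4)_\infty$, together with the classical identities $(q;q^2)_\infty(-q;q^2)_\infty=(q^2;q^4)_\infty$ and $(-q;-q)_\infty=(-q;q^2)_\infty(q^2;q^2)_\infty$, one checks the factorization
\[
\frac{(q^2;q^2)_\infty^4}{(q;q)_\infty(q^4;q^4)_\infty} \;=\; (-q;-q)_\infty\,(q^2;q^2)_\infty.
\]
Applying Euler's pentagonal theorem \eqref{e2.0.3.2} to each factor then gives $(-q;-q)_\infty = \sum_k (-1)^{\lceil k/2\rceil+G_k}q^{G_k}$ (substituting $q \mapsto -q$) and $(q^2;q^2)_\infty = \sum_j (-1)^{\lceil j/2\rceil}q^{2G_j}$ (substituting $q \mapsto q^2$), whose Cauchy product is
\[
\sum_{k,j\ge 0}(-1)^{\lceil k/2\rceil+\lceil j/2\rceil+G_k}\,q^{G_k+2G_j}.
\]

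Since $2G_j$ is even, the exponent of $-1$ can be rewritten as $\lceil k/2\rceil+\lceil j/2\rceil+G_k-2G_j$, matching the form in the theorem. Reading off the coefficient of $q^n$ on both sides then yields the claim: contributions come only from pairs with $n=G_k+2G_j$, so for each admissible $j$ the relation $n-2G_j=G_k$ specifies the $k$ in the ``$n=G_k$'' clause of the statement, and that term contributes $(-1)^{\lceil k/2\rceil+\lceil j/2\rceil+G_k-2G_j}$; when no such representation exists the sum is empty. I expect the main obstacle to be spotting the factorization in the second paragraph: once $\frac{(q^2;q^2)_\infty^4}{(q;q)_\infty(q^4;q^4)_\infty}=(-q;-q)_\infty(q^2;q^2)_\infty$ is in hand, the remainder of the argument is routine Cauchy multiplication and sign bookkeeping.
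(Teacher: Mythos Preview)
Your proposal is correct and follows essentially the same route as the paper: multiply the $pod$ generating function by $(q^2;q^2)_\infty^3$, recognize the result as $(-q;-q)_\infty\,(q^2;q^2)_\infty$, expand each factor with Euler's pentagonal theorem, and compare coefficients. The only cosmetic difference is that the paper obtains the factorization in one line by quoting the alternate form $\sum_{n\ge0}pod(n)q^n=(-q;-q)_\infty/(q^2;q^2)_\infty^2$, whereas you derive the same identity by hand from Pochhammer splittings.
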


\begin{proof}
    We can write the generating function of $pod(n)$ as
    \begin{equation*}
        \sum_{n=0}^{\infty}pod(n)q^n = \frac{(-q;-q)_{\infty}}{(q^2;q^2)_{\infty}^2}.
    \end{equation*}
Multiplying both sides by $(q^2;q^2)_{\infty}^3$ in the above equation, we arrive at
    \begin{equation*}
        (q^2;q^2)_{\infty}^3 \sum_{n=0}^{\infty}pod(n)q^n  = (q^2;q^2)_{\infty} (-q;-q)_{\infty}
    \end{equation*}
Using $\eqref{e2.0.3.2}$ and $\eqref{e2.0.3.3}$, we get
    \begin{equation*}      
    \left( \sum_{n=0}^{\infty}(-1)^n(2n+1)q^{T_n} \right)\left( \sum_{n=0}^{\infty}pod(n)q^{n} \right)  = \left( \sum_{k=0}^{\infty}(-1)^{\lceil k/2 \rceil} q^{2G_k} \right) \left( \sum_{k=0}^{\infty} (-1)^{\lceil k/2 \rceil + G_k } q^{G_k} \right).
    \end{equation*}
Finally, equating the coefficient of $q^n$ on each side gives the result.
\end{proof}

\begin{theorem}
    For $n\geq 0$,
    \begin{equation*}
        \sum_{j=-\infty}^{\infty} (3j+1) pod(n-3j^2-2j)=\begin{cases}
             \displaystyle\sum_{j=0}^{\infty}(-1)^{\lceil k/2 \rceil + \lceil j/2 \rceil - 4G_j}, \quad if \hspace{1mm} n=G_k, \\
            0, \quad otherwise.
        \end{cases}
    \end{equation*}
\end{theorem}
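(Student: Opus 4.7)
The plan is to proceed in complete parallel with the proof of Theorem 3.2, trading Jacobi's identity for Ramanujan's identity \eqref{e2.1.2}. Starting from the basic generating function
$$\sum_{n=0}^{\infty} pod(n)q^n = \frac{(q^2;q^2)_{\infty}}{(q;q)_{\infty}(q^4;q^4)_{\infty}},$$
I would multiply both sides by $(q;q)_{\infty}^{2}(q^{4};q^{4})_{\infty}^{2}/(q^{2};q^{2})_{\infty}$. The right-hand side then collapses to the clean product $(q;q)_{\infty}(q^{4};q^{4})_{\infty}$, while on the left this factor can be replaced, using \eqref{e2.1.2}, by the series $\sum_{j=-\infty}^{\infty}(3j+1)q^{3j^{2}+2j}$. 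Cauchy multiplication of this series against $\sum pod(n)q^{n}$ then produces the convolution $\sum_{j}(3j+1)pod(n-3j^{2}-2j)$ as the coefficient of $q^{n}$, which is precisely the left-hand side of the stated identity.

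Next, I would expand the product $(q;q)_{\infty}(q^{4};q^{4})_{\infty}$ by applying Euler's Pentagonal Number Theorem \eqref{e2.0.3.2} to each factor separately. Reindexing the bilateral sum as a unilateral one over $k\geq 0$ via the standard $n\leftrightarrow -n$ pairing gives
$$(q;q)_{\infty}=\sum_{k=0}^{\infty}(-1)^{\lceil k/2\rceil}q^{G_{k}},$$
and replacing $q$ by $q^{4}$ yields $(q^{4};q^{4})_{\infty}=\sum_{j=0}^{\infty}(-1)^{\lceil j/2\rceil}q^{4G_{j}}$. The apparent factor $(-1)^{-4G_{j}}$ that appears in the statement of the theorem is identically $+1$, since $4G_{j}$ is always even; it is written this way only to keep the notation uniform with Theorem 3.2.

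Combining these two expansions, I obtain
$$\left(\sum_{j=-\infty}^{\infty}(3j+1)q^{3j^{2}+2j}\right)\left(\sum_{n=0}^{\infty}pod(n)q^{n}\right)=\left(\sum_{k=0}^{\infty}(-1)^{\lceil k/2\rceil}q^{G_{k}}\right)\left(\sum_{j=0}^{\infty}(-1)^{\lceil j/2\rceil}q^{4G_{j}}\right),$$
and equating coefficients of $q^{n}$ on each side yields the two cases of the theorem: when $n$ has at least one representation of the form $G_{k}+4G_{j}$, the coefficient is the prescribed alternating sum over the valid $(k,j)$-pairs, and otherwise it vanishes.

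The only step requiring any genuine care is the reindexing of Euler's theorem into the $G_{k}$-parameterization, since this is what pins down the sign $(-1)^{\lceil k/2\rceil}$; this same reindexing is tacitly used in the proof of Theorem 3.2, and can be verified by inspecting the first several pentagonal numbers. Everything else reduces to a single application of Cauchy multiplication and coefficient extraction.
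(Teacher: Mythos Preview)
Your proposal is correct and is essentially identical to the paper's own proof: both multiply the generating function of $pod(n)$ by $(q;q)_{\infty}^{2}(q^{4};q^{4})_{\infty}^{2}/(q^{2};q^{2})_{\infty}$, invoke Ramanujan's identity \eqref{e2.1.2} on the left, expand $(q;q)_{\infty}(q^{4};q^{4})_{\infty}$ via the $G_k$-indexed form of Euler's theorem on the right, and compare coefficients. Your remark that $(-1)^{-4G_j}\equiv 1$ is a helpful clarification the paper leaves implicit.
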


\begin{proof}
From $\eqref{e2.1.2}$, we have
    \begin{equation*}
       \frac{(q;q)_{\infty}^2(q^4;q^4)_{\infty}^2}{(q^2;q^2)_{\infty}} = \sum_{n=-\infty}^{\infty}(3n+1)q^{3n^2+2n},
    \end{equation*}
Multiplying the above equation by $(q^2;q^2)_{\infty}/(q;q)_{\infty}(q^4;q^4)_{\infty}$, to get
\begin{equation*}
\frac{(q^2;q^2)_{\infty}}{(q;q)_{\infty}(q^4;q^4)_{\infty}}\left( \sum_{n=-\infty}^{\infty}(3n+1)q^{3n^2+2n} \right)  = (q;q)_{\infty}(q^4;q^4)_{\infty}
\end{equation*}
On simplification, we get
\begin{equation}\label{e1.1}
\left( \sum_{n=0}^{\infty}pod(n)q^n \right)\left( \sum_{n=-\infty}^{\infty}(3n+1)q^{3n^2+2n} \right)  = \left( \sum_{k=0}^{\infty}(-1)^{\lceil k/2 \rceil} q^{G_k} \right) \left( \sum_{k=0}^{\infty}(-1)^{\lceil k/2 \rceil}  q^{4G_k}\right). 
\end{equation}
Finally, equating the coefficient of $q^n$ on each side gives the result.
\end{proof}

\begin{corollary}
 For $n\geq 0$, we have
    \begin{equation*}
        \sum_{j=-\infty}^{\infty} (3j+1) pod(n-3j^2-2j)=\begin{cases}
            \displaystyle\sum_{j=0}^{\infty}(-1)^{\lceil k/2 \rceil + \lceil j/2 \rceil - 2G_j}, \quad if \hspace{1mm} n=T_k, \\
            0, \quad otherwise.
        \end{cases}
    \end{equation*}    
\end{corollary}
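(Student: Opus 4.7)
The plan is to re-run the argument of Theorem~3.3, but to repackage the right-hand side of \eqref{e1.1} so that triangular-number exponents appear in place of $4G_k$. Concretely, I would start from the identity
\begin{equation*}
\left( \sum_{n=0}^{\infty}pod(n)q^n \right)\left( \sum_{n=-\infty}^{\infty}(3n+1)q^{3n^2+2n} \right) = (q;q)_{\infty}(q^4;q^4)_{\infty},
\end{equation*}
already established in the proof of the previous theorem, and work only on the right-hand side.

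The technical heart of the argument is the product identity $(q;q)_\infty(q^4;q^4)_\infty = (q^2;q^2)_\infty\,\psi(-q)$. I would derive it from \eqref{e2.0.3.1} by splitting $(-q;-q)_\infty$ according to the parity of the index, obtaining $(-q;-q)_\infty = (-q;q^2)_\infty(q^2;q^2)_\infty$, and then combining the elementary $q$-Pochhammer identities $(-q;q^2)_\infty(q;q^2)_\infty = (q^2;q^4)_\infty$, $(q;q^2)_\infty = (q;q)_\infty/(q^2;q^2)_\infty$, and $(q^2;q^4)_\infty = (q^2;q^2)_\infty/(q^4;q^4)_\infty$. After a short cancellation this yields $\psi(-q) = (q;q)_\infty(q^4;q^4)_\infty/(q^2;q^2)_\infty$, which rearranges to the claim.

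With this factorization in hand, I would apply Euler's pentagonal number theorem \eqref{e2.0.3.2} with $q$ replaced by $q^2$ to get $(q^2;q^2)_\infty = \sum_{j\geq 0}(-1)^{\lceil j/2\rceil}q^{2G_j}$, and use \eqref{e2.0.3.1} together with the observation $(-1)^{n(n+1)/2} = (-1)^{\lceil n/2\rceil}$ to rewrite $\psi(-q) = \sum_{k\geq 0}(-1)^{\lceil k/2\rceil}q^{T_k}$. Substituting into the reformulated right-hand side turns \eqref{e1.1} into
\begin{equation*}
\left(\sum_{n=0}^{\infty}pod(n)q^n\right)\!\left(\sum_{n=-\infty}^{\infty}(3n+1)q^{3n^2+2n}\right) = \left(\sum_{j=0}^\infty (-1)^{\lceil j/2\rceil} q^{2G_j}\right)\!\left(\sum_{k=0}^\infty (-1)^{\lceil k/2\rceil} q^{T_k}\right),
\end{equation*}
and equating the coefficient of $q^n$ on each side, via Cauchy multiplication, yields the stated recurrence.

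I expect the main obstacle to be spotting the correct repackaging of $(q;q)_\infty(q^4;q^4)_\infty$ through $\psi(-q)$; once that identity is secured, the remainder is a formal parallel of the proof of Theorem~3.3, and no new convergence or combinatorial input is required.
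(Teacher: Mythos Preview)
Your proposal is correct and follows essentially the same route as the paper. The only cosmetic difference is that the paper obtains the key factorization $(q;q)_\infty(q^4;q^4)_\infty=(q^2;q^2)_\infty\,\psi(-q)$ by invoking \eqref{e1.1.1} from the proof of Theorem~3.1 (which is precisely this identity rearranged), whereas you re-derive it from the product form \eqref{e2.0.3.1} of $\psi(-q)$; after that, both arguments expand $(q^2;q^2)_\infty$ via Euler's pentagonal number theorem and compare coefficients.
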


\begin{proof}
From $\eqref{e1.1.1}$, we have
    \begin{equation*}
        (q;q)_{\infty}(q^4;q^4)_{\infty}=(q^2;q^2)_{\infty}\sum_{n=0}^{\infty}(-1)^{\lceil n/2 \rceil}q^{n(n+1)/2}.
    \end{equation*}
And, from $\eqref{e1.1}$ we find that
\begin{equation*}
     \left( \sum_{n=0}^{\infty}pod(n)q^n \right)\left( \sum_{n=-\infty}^{\infty}(3n+1)q^{3n^2+2n} \right)  = (q^2;q^2)_{\infty}\sum_{k=0}^{\infty}(-1)^{\lceil k/2 \rceil}q^{k(k+1)/2},
\end{equation*}
which is equivalent to
\begin{equation*}
     \left( \sum_{n=0}^{\infty}pod(n)q^n \right)\left( \sum_{n=-\infty}^{\infty}(3n+1)q^{3n^2+2n} \right)  = \left( \sum_{k=0}^{\infty}(-1)^{\lceil k/2 \rceil} q^{2G_k} \right) \left( \sum_{k=0}^{\infty}(-1)^{\lceil k/2 \rceil}q^{T_k} \right).
\end{equation*}
Finally, equating the coefficient of $q^n$ on each side gives the result.
\end{proof}

\section{Connections of $pod(n)$ with other partition functions}
In this section, we will see how $pod(n)$ is related to various other partition functions.
\subsection{Connection with the partition function $ped(n)$}
The partition function $ped(n)$ enumerates the number of partitions of $n$ where the even parts are distinct and odd parts maybe repeated. For example, $ped(6)=9$ and the partitions are 
\begin{equation*}
    6, \quad 5+1, \quad 4+2, \quad 4+1+1, \quad 3+3, \quad 3+2+1, \quad 3+1+1+1, \quad 2+1+1+1+1, \quad 1+1+1+1+1+1.
\end{equation*}
The generating function for $ped(n)$ is given by
\begin{equation}\label{e2.1}
    \sum_{n=0}^{\infty}ped(n)q^n = \frac{(-q^2;q^2)_{\infty}}{(q;q^2)_{\infty}}=\frac{(q^4;q^4)_{\infty}}{(q;q)_{\infty}}.
\end{equation}

In this section, we shall prove that the partition function $pod(n)$ can be expressed in terms of the partition function $ped(n)$.

\begin{theorem}
For any nonnegative integer $n$, we have
    \begin{equation*}
        pod(n) = \sum_{j=0}^{\infty}(-1)^jpod\left( \frac{j}{2} \right)ped(n-j).
    \end{equation*}
with $pod(x)=0$ if $x$ is not an integer.
\end{theorem}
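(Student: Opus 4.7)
The plan is to follow the generating-function strategy already used in the theorems of Section~3: recast the claimed identity as an equality of formal power series, express each factor as a known infinite product, and extract the recurrence by Cauchy multiplication. The key first observation is that the sum $\sum_{j\geq 0}(-1)^j pod(j/2)q^j$ is supported on even indices, since $pod(j/2)=0$ whenever $j$ is odd; after restriction to $j=2k$ it has the shape of the $pod$-generating function with $q$ replaced by $-q^2$. Using the first expression for $\sum pod(n)q^n$ recalled in the introduction, this substitution yields
$$\sum_{k\geq 0}(-1)^k pod(k)q^{2k}=\frac{(q^2;q^4)_\infty}{(q^4;q^4)_\infty}.$$

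Next I would multiply this by the $ped$-generating function from \eqref{e2.1}. Using the elementary relation $(q^2;q^4)_\infty=(q^2;q^2)_\infty/(q^4;q^4)_\infty$, the product telescopes to
$$\frac{(q^2;q^4)_\infty}{(q^4;q^4)_\infty}\cdot\frac{(q^4;q^4)_\infty}{(q;q)_\infty}=\frac{(q^2;q^2)_\infty}{(q;q)_\infty(q^4;q^4)_\infty}=\sum_{n\geq 0}pod(n)q^n,$$
matching the third form of the $pod$-generating function recalled in the introduction. Applying Cauchy multiplication to
$$\left(\sum_{j\geq 0}(-1)^j pod(j/2)q^j\right)\left(\sum_{m\geq 0}ped(m)q^m\right) = \sum_{n\geq 0}pod(n)q^n$$
and equating the coefficient of $q^n$ on both sides would then produce the claimed recurrence.

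The only step requiring any real thought is recognising the collapsed series $\sum_k(-1)^k pod(k)q^{2k}$ as a disguised $pod$-generating function under the substitution $q\mapsto -q^2$; once this is seen, the remainder reduces to the telescoping cancellation of $(q^4;q^4)_\infty$ in the product and a routine application of Cauchy multiplication. No auxiliary identity beyond the three forms of the $pod$-generating function in the introduction and the $ped$-generating function \eqref{e2.1} is required.
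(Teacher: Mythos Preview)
Your overall approach coincides with the paper's: both arguments rewrite the $pod$-generating function as $\dfrac{1}{\psi(q^2)}\cdot\dfrac{(q^4;q^4)_\infty}{(q;q)_\infty}$, recognise the second factor as $\sum ped(n)q^n$ and the first as $\sum_{k\ge 0}(-1)^k pod(k)q^{2k}$ (the $pod$-series under $q\mapsto -q^2$), and then extract coefficients. Your product manipulation via $(q^2;q^4)_\infty=(q^2;q^2)_\infty/(q^4;q^4)_\infty$ is a perfectly good substitute for the paper's appeal to $\psi(q^2)$.

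There is, however, a genuine slip in your ``key first observation''. When you restrict $\sum_{j\ge 0}(-1)^j pod(j/2)q^j$ to even $j=2k$, the sign becomes $(-1)^{2k}=1$, not $(-1)^k$; so this series equals $\sum_{k\ge 0}pod(k)q^{2k}$, the $pod$-series under $q\mapsto q^2$, not $q\mapsto -q^2$. Your displayed identity $\sum_{k\ge 0}(-1)^k pod(k)q^{2k}=(q^2;q^4)_\infty/(q^4;q^4)_\infty$ is correct on its own, but it is not the series appearing in the theorem as written. In fact a quick check at $n=2$ shows the stated formula fails ($pod(0)ped(2)+pod(1)ped(0)=3\neq 1$), whereas $\sum_k(-1)^k pod(k)\,ped(n-2k)$ gives the right value. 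The discrepancy is a typo in the theorem statement (the exponent of $-1$ should be $j/2$ rather than $j$); the paper's own proof establishes exactly the alternating-in-$k$ version, and your argument, once the sign is tracked correctly, does the same.
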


\begin{proof}
It is known that 
\begin{equation*}
    \sum_{n=0}^{\infty}pod(n)q^n = \frac{1}{\psi(-q)}.
\end{equation*}
We can write the generating function of $pod(n)$ as
   \begin{equation*}
       \sum_{n=0}^{\infty} pod(n)q^n  = \frac{(q^2;q^2)_{\infty}}{(q;q)_{\infty}(q^4;q^4)_{\infty}},
    \end{equation*}
which after some manipulations takes the shape
    \begin{equation*}
     \sum_{n=0}^{\infty} pod(n)q^n = \frac{1}{\psi(q^2)} \sum_{n=0}^{\infty}ped(n)q^n,
    \end{equation*}
which is equivalent to
    \begin{equation*}
     \sum_{n=0}^{\infty} pod(n)q^n     = \left( \sum_{n=0}^{\infty}(-1)^{n}pod(n)q^{2n} \right) \left( \sum_{n=0}^{\infty}ped(n)q^n \right).
   \end{equation*}
Finally, equating the coefficient of $q^n$ on each side gives the result.
\end{proof}

\subsection{Connection with the partitions into distinct odd parts}

The number of partitions of $n$ into distinct parts is usually denoted by $q(n)$. The number of partitions of $n$ into distinct odd parts is denoted by $q_{odd}(n)$. The generating functions for $q(n)$ and $q_{odd}(n)$ are given by

\begin{equation*}
    \sum_{n=0}^{\infty}q(n)q^n=(-q;q)_{\infty},
\end{equation*}
and
\begin{equation*}
    \sum_{n=0}^{\infty}q_{odd}(n)q^n=(-q;q^2)_{\infty}.
\end{equation*}

\begin{theorem}
    For any nonnegative integer $n$, the partition functions $p(n)$, $q_{odd}(n)$ and $pod(n)$ are related by
    \begin{equation*}
        pod(n)=\sum_{j=0}^{\infty}q_{odd}(j)p\left( \frac{n}{2}-\frac{j}{2} \right),
    \end{equation*}
with $p(x)=0$ if $x$ is not an integer.
\end{theorem}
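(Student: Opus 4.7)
The plan is to start from the product form of the generating function for $pod(n)$ that is already recorded in the introduction, namely
\begin{equation*}
\sum_{n=0}^{\infty}pod(n)q^n = \frac{(-q;q^2)_{\infty}}{(q^2;q^2)_{\infty}},
\end{equation*}
and factor the right-hand side into two power series whose coefficients are exactly $q_{odd}$ and $p$ respectively.

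First I would use the generating function for $q_{odd}(n)$ recalled just before the theorem, namely $(-q;q^2)_{\infty} = \sum_{n\geq 0} q_{odd}(n)q^n$. Next, I would replace $q$ by $q^2$ in the standard generating function for $p(n)$ to obtain
\begin{equation*}
\frac{1}{(q^2;q^2)_{\infty}} = \sum_{n=0}^{\infty} p(n)q^{2n}.
\end{equation*}
Substituting these two identities into the generating function for $pod(n)$ yields
\begin{equation*}
\sum_{n=0}^{\infty}pod(n)q^n = \left(\sum_{j=0}^{\infty}q_{odd}(j)q^j\right)\left(\sum_{k=0}^{\infty}p(k)q^{2k}\right).
\end{equation*}

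Applying the Cauchy product to the right-hand side and collecting the coefficient of $q^n$ gives
\begin{equation*}
pod(n) = \sum_{\substack{j,k\geq 0 \\ j+2k = n}} q_{odd}(j)p(k) = \sum_{j=0}^{\infty}q_{odd}(j)p\!\left(\frac{n-j}{2}\right),
\end{equation*}
where terms for which $(n-j)/2$ fails to be a nonnegative integer vanish by the convention $p(x)=0$ for non-integer $x$. Rewriting $(n-j)/2 = n/2 - j/2$ yields the claimed identity.

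No real obstacle is expected: the entire argument is an application of the Cauchy product to a factorization of the generating function, and the two factors are classical. The only subtle point is the $q\mapsto q^2$ rescaling in the partition generating function, which is what produces the $n/2 - j/2$ argument (and the parity condition that forces $n-j$ to be even).
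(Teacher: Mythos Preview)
Your proof is correct and follows essentially the same route as the paper: both start from $\sum_{n\geq 0}pod(n)q^n=(-q;q^2)_{\infty}/(q^2;q^2)_{\infty}$, recognize the two factors as $\sum q_{odd}(j)q^j$ and $\sum p(k)q^{2k}$, and then equate coefficients of $q^n$ via the Cauchy product. Your write-up is in fact slightly more explicit than the paper's about the $q\mapsto q^2$ substitution and the resulting parity constraint, but the argument is the same.
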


\begin{proof}
 Considering the generating function of $pod(n)$, we can write
\begin{equation*}
    \sum_{n=0}^{\infty}pod(n)q^n = \frac{(-q;q^2)_{\infty}}{(q^2;q^2)_{\infty}}
\end{equation*}
which is equivalent to
\begin{equation*}
    \sum_{n=0}^{\infty}pod(n)q^n  = \left( \sum_{n=0}^{\infty}p(n)q^{2n} \right)\left( \sum_{n=0}^{\infty}q_{odd}q^{n} \right).
\end{equation*}
Finally, equating the coefficient of $q^n$ on each side gives the result.
\end{proof}

\begin{theorem}
    For any nonnegative integer $n$, the partition functions $q_{odd}(n)$ and $pod(n)$ are related by

\begin{equation*}
        q_{\text{odd}}(n) = \sum_{j=-\infty}^{\infty}(-1)^j pod\left( \frac{n}{2}-\frac{j(3j+1)}{2} \right).
\end{equation*}
with $pod(x)=0$ if $x$ is not an integer. 
\end{theorem}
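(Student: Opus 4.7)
The plan is to mirror the generating-function strategy already used in the preceding theorems: separate the $pod$ generating function from the left-hand series, expand the leftover factor via Euler's Pentagonal Number Theorem, and read off the identity by equating Cauchy-product coefficients.

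First, I would rewrite
\[
\sum_{n=0}^{\infty} q_{\text{odd}}(n)\,q^{n} \;=\; (-q;q^{2})_{\infty} \;=\; (q^{2};q^{2})_{\infty}\cdot\frac{(-q;q^{2})_{\infty}}{(q^{2};q^{2})_{\infty}} \;=\; (q^{2};q^{2})_{\infty}\sum_{n=0}^{\infty} pod(n)\,q^{n}.
\]
Next, I would apply Euler's Pentagonal Number Theorem~\eqref{e2.0.3.2} with $q$ replaced by $q^{2}$, obtaining
\[
(q^{2};q^{2})_{\infty} \;=\; \sum_{j=-\infty}^{\infty}(-1)^{j}\,q^{\,j(3j+1)},
\]
where the exponent is exactly $2\cdot \tfrac{j(3j+1)}{2}$, i.e.\ twice a generalized pentagonal number. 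Substituting back yields
\[
\sum_{n=0}^{\infty} q_{\text{odd}}(n)\,q^{n} \;=\; \Bigl(\sum_{j=-\infty}^{\infty}(-1)^{j}q^{j(3j+1)}\Bigr)\Bigl(\sum_{m=0}^{\infty} pod(m)\,q^{m}\Bigr).
\]

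The final step is to compare coefficients of $q^{n}$ on both sides via the Cauchy product. The convention that $pod(x)=0$ whenever $x$ is not a nonnegative integer absorbs those terms for which the shift would make the index of $pod$ inadmissible, and the claimed identity emerges from routine rewriting.

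I do not expect a genuine obstacle. The one delicate point is the parity bookkeeping caused by the substitution $q\mapsto q^{2}$: it is precisely this doubling that produces the half-integer argument $\tfrac{n}{2}-\tfrac{j(3j+1)}{2}$ displayed in the theorem, in the same spirit as the device used for $p\!\left(\tfrac{n}{2}-\tfrac{j}{2}\right)$ in Theorem~4.4. Beyond that, every step is a direct echo of the manipulations carried out in Theorems~3.2, 3.3, and~4.2, and Euler's Pentagonal Number Theorem is the only nontrivial ingredient required.
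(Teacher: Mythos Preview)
Your approach is essentially identical to the paper's: both multiply the $pod$ generating function by $(q^{2};q^{2})_{\infty}$, expand that factor via Euler's Pentagonal Number Theorem with $q\mapsto q^{2}$, identify the other side as $(-q;q^{2})_{\infty}=\sum_{n\ge 0} q_{\text{odd}}(n)q^{n}$, and then equate coefficients of $q^{n}$. The paper's writeup is slightly terser (it simply states the product identity and says ``equating the coefficient of $q^{n}$ on each side gives the result''), but the underlying argument is the same.
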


\begin{proof}
We can write
    \begin{equation*}
    {(q^2;q^2)_{\infty}}\sum_{n=0}^{\infty}pod(n)q^n  = {(-q;q^2)_{\infty}},
    \end{equation*}
which is equivalent to
    \begin{equation*}
    \left( \sum_{j=-\infty}^{\infty} (-1)^j q^{j(3j+1)} \right)\left( \sum_{n=0}^{\infty}pod(n)q^n \right)  = \sum_{n=0}^{\infty}q_{odd}(n)q^n.
\end{equation*}
Finally, equating the coefficient of $q^n$ on each side gives the result.
\end{proof}

We denote the difference of the number of partitions of $n$ into an even number of parts and partitions of $n$ into an odd number of parts by $p_{e-o}(n)$. The generating function of $p_{e-o}(n)$ is given by 
\begin{equation*}
    \sum_{n=0}^{\infty}p_{e-o}(n)q^n = \frac{1}{(-q;q)_{\infty}}.
\end{equation*}

We have the following result.

\begin{corollary}
    For any nonnegative integer $n$, the partition functions $ p_{\text{e$-$o}}(n)$ and $pod(n)$ are related by
    \begin{equation*}
    p_{e-o}(n) = (-1)^n\sum_{j=-\infty}^{\infty}(-1)^j pod\left( \frac{n}{2}-\frac{j(3j+1)}{2} \right).
\end{equation*}
with $pod(x)=0$ if $x$ is not an integer.
\end{corollary}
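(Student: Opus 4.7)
The plan is to reduce the corollary to the preceding theorem by establishing the auxiliary identity
$$p_{e-o}(n) = (-1)^n q_{odd}(n).$$
Once this is in place, the preceding theorem's formula
$$q_{odd}(n) = \sum_{j=-\infty}^{\infty}(-1)^j pod\!\left(\frac{n}{2}-\frac{j(3j+1)}{2}\right)$$
combines with it to give exactly the asserted expression for $p_{e-o}(n)$.

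To prove the identity $p_{e-o}(n) = (-1)^n q_{odd}(n)$ at the level of generating functions, I would substitute $q\mapsto -q$ in $\sum_{n\geq 0} p_{e-o}(n)q^n = 1/(-q;q)_\infty$, which yields
$$\sum_{n=0}^{\infty}(-1)^n p_{e-o}(n)q^n = \frac{1}{(q;-q)_\infty}.$$
Splitting the product $(q;-q)_\infty$ according to the parity of its indices gives $(q;-q)_\infty = (q;q^2)_\infty(-q^2;q^2)_\infty$. Combining the standard factorization $(q;q^2)_\infty(-q;q^2)_\infty = (q^2;q^4)_\infty$ with the elementary identity $(-q^2;q^2)_\infty(q^2;q^4)_\infty = 1$ (both factors equal $(q^4;q^4)_\infty/(q^2;q^2)_\infty$ and its reciprocal), one obtains
$$\frac{1}{(q;-q)_\infty} = (-q;q^2)_\infty = \sum_{n=0}^{\infty}q_{odd}(n)q^n,$$
and comparing coefficients of $q^n$ yields the desired identity.

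The calculation is almost entirely mechanical once the substitution $q\mapsto -q$ is in place; the only real care lies in the Pochhammer bookkeeping when splitting $(q;-q)_\infty$ by parity and recognizing the resulting cancellation. Substituting the preceding theorem's expression for $q_{odd}(n)$ into $p_{e-o}(n) = (-1)^n q_{odd}(n)$ then finishes the proof.
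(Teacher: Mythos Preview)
Your proposal is correct and follows the same route as the paper: both deduce the corollary from the preceding theorem via the identity $q_{odd}(n)=(-1)^n p_{e-o}(n)$ (equivalently $p_{e-o}(n)=(-1)^n q_{odd}(n)$). The paper simply asserts this identity as known, whereas you supply a short generating-function proof of it; your Pochhammer manipulations are all correct.
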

    The proof is immediate as
    \begin{equation*}
         q_{odd}(n)=(-1)^n p_{e-o}(n).
    \end{equation*}

\subsection{Connection with the overpartition and the partition function $A(n)$}
An overpartition of a nonnegative integer $n$ is a non-increasing sequence of natural numbers whose sum is $n$, where the first occurence (or equivalently, the last occurence) of a number may be overlined. The eight overpartitions of $3$ are
\begin{equation*}
3,\quad \overline{3} ,\quad 2+1,\quad \overline{2}+1,\quad 2+\overline{1} ,\quad \overline{2}+\overline{1} ,\quad 1+1+1, \quad \overline{1}+1+1.
\end{equation*}
The number of overpartitions of $n$ is denoted by $\overline{p}(n)$ and its generating function is given by
\begin{equation*}
    \sum_{n=0}^{\infty}\overline{p}(n)q^n = \frac{(-q;q)_{\infty}}{(q;q)_{\infty}} = \frac{(q^2;q^2)_{\infty}}{(q;q)_{\infty}^2}.
\end{equation*}
Recently, Merca \cite{8} defined the following functions.
\begin{definition}
For a positive integer $n$, let
\begin{enumerate}
    \item $a_e(n)$ be the number of partitions of $n$ into an even number of parts in which the even parts can appear in two colours.
    \item $a_o(n)$ be the number of partitions of $n$ into an odd number of parts in which the even parts can appear in two colours.
    \item $A(n)=a_e(n)-a_o(n)$.
\end{enumerate}
\end{definition}
He found that the generating function of $A(n)$ is
\begin{equation*}
    \sum_{n=0}^{\infty}A(n)q^n = \frac{1}{(-q;q)_{\infty}(-q^2;q^2)_{\infty}}=(q;q^2)_{\infty}(q^2;q^4)_{\infty} = \frac{(q;q)_{\infty}}{(q^4;q^4)_{\infty}}.
\end{equation*}

\begin{theorem}
    For any nonnegative integer $n$, we have
    \begin{equation*}
        pod(n) = \sum_{j=0}^{\infty}A(j)\overline{p}(n-j).
    \end{equation*}
\end{theorem}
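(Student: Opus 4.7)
The plan is to verify the identity at the level of generating functions and then extract the stated convolution by equating coefficients of $q^n$. Since the right-hand side of the claimed identity is a Cauchy product of the sequences $A(j)$ and $\overline{p}(n-j)$, it suffices to show that
\begin{equation*}
\left(\sum_{n=0}^{\infty} A(n)q^n\right)\left(\sum_{n=0}^{\infty}\overline{p}(n)q^n\right) = \sum_{n=0}^{\infty} pod(n)q^n.
\end{equation*}

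To do this, I would use the product forms that have already been recorded in the paper: the generating function
\begin{equation*}
\sum_{n=0}^{\infty}A(n)q^n=\frac{(q;q)_{\infty}}{(q^4;q^4)_{\infty}}
\end{equation*}
and
\begin{equation*}
\sum_{n=0}^{\infty}\overline{p}(n)q^n=\frac{(q^2;q^2)_{\infty}}{(q;q)_{\infty}^{2}}.
\end{equation*}
Multiplying these two expressions, the factor $(q;q)_{\infty}$ cancels one power of $(q;q)_{\infty}$ in the denominator, leaving
\begin{equation*}
\frac{(q;q)_{\infty}}{(q^4;q^4)_{\infty}}\cdot\frac{(q^2;q^2)_{\infty}}{(q;q)_{\infty}^{2}} = \frac{(q^2;q^2)_{\infty}}{(q;q)_{\infty}(q^4;q^4)_{\infty}},
\end{equation*}
which is precisely the third product expression for $\sum_{n\ge 0}pod(n)q^n$ recorded in the introduction.

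Having established equality of the two power series, I would conclude by invoking the Cauchy product formula on the left-hand side and equating the coefficient of $q^n$ on both sides, which yields $pod(n)=\sum_{j=0}^{\infty}A(j)\overline{p}(n-j)$ as claimed. Since the proof reduces to a single algebraic cancellation of infinite products that are already available in the preliminaries, there is no genuine obstacle; the only point requiring care is to choose, among the several equivalent product forms of each generating function listed in the paper, the pair whose common factor $(q;q)_{\infty}$ cancels cleanly. With the choices above this is immediate, so the argument will be short.
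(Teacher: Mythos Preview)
Your argument is correct and is essentially identical to the paper's own proof: the paper also multiplies the generating function $\sum \overline{p}(n)q^n=(q^2;q^2)_\infty/(q;q)_\infty^2$ by $(q;q)_\infty/(q^4;q^4)_\infty=\sum A(n)q^n$, observes the cancellation to obtain $(q^2;q^2)_\infty/((q;q)_\infty(q^4;q^4)_\infty)=\sum pod(n)q^n$, and then equates coefficients of $q^n$.
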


\begin{proof}
We start with the generating function of $\overline{p}(n)$,
    \begin{equation*}
        \sum_{n=0}^{\infty}\overline{p}(n)q^n = \frac{(q^2;q^2)_{\infty}}{(q;q)_{\infty}^2}.
    \end{equation*}
Multiplying the above equation by $(q;q)_{\infty}/(q^4;q^4)_{\infty}$, to get
    \begin{equation*}
        \frac{(q;q)_{\infty}}{(q^4;q^4)_{\infty}}  \sum_{n=0}^{\infty}\overline{p}(n)q^n = \frac{(q^2;q^2)_{\infty}}{(q;q)_{\infty}(q^4;q^4)_{\infty}},
    \end{equation*}
which is equivalent to
    \begin{equation*}
        \left( \sum_{n=0}^{\infty}A(n)q^n \right)\left( \sum_{n=0}^{\infty}\overline{p}(n)q^n \right) =\sum_{n=0}^{\infty}pod(n)q^n.
    \end{equation*}
Finally, equating the coefficient of $q^n$ on each side gives the result.
\end{proof}

\subsection{Connection with the overpartitions into odd parts}
We denote by $\overline{p}_o(n)$ the number of overpartitions of $n$ into odd parts. The generating function for $\overline{p}_o(n)$ is given by
\begin{equation}\label{e3.3.3.1}
    \sum_{n=0}^{\infty}\overline{p}_o(n)q^n = \frac{(-q;q^2)_{\infty}}{(q;q^2)_{\infty}} = \frac{(q^2;q^2)_{\infty}^3}{(q;q)_{\infty}^2(q^4;q^4)_{\infty}}.
\end{equation}

In this section, we shall prove that the partition function $\overline{p}_o(n)$ can be expressed in terms of the partition function $pod(n)$.

\begin{theorem}
    For any nonnegative integer $n$,  we have
    \begin{equation*}
        pod(n) = \sum_{j=0}^{\infty}(-1)^jpod(j)\overline{p}_o(n-j).
    \end{equation*}
\end{theorem}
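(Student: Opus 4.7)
The plan is to mirror the structure of the previous theorems in this section: recognize that the desired identity is the Cauchy product corresponding to a factorization of the generating function of $pod(n)$ into $\sum(-1)^j pod(j) q^j$ times $\sum \overline{p}_o(n)q^n$, and then verify this factorization at the level of infinite products.

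First I would start from the generating function
\begin{equation*}
\sum_{n=0}^{\infty}\overline{p}_o(n)q^n = \frac{(-q;q^2)_{\infty}}{(q;q^2)_{\infty}}
\end{equation*}
given in \eqref{e3.3.3.1}, and multiply both sides by $(q;q^2)_{\infty}/(q^2;q^2)_{\infty}$. On the right, the factor $(q;q^2)_{\infty}$ cancels, leaving $(-q;q^2)_{\infty}/(q^2;q^2)_{\infty} = \sum_{n=0}^{\infty}pod(n)q^n$.

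Next I would identify the multiplier $(q;q^2)_{\infty}/(q^2;q^2)_{\infty}$ as $\sum_{j=0}^{\infty}(-1)^j pod(j)q^j$. This is the step that makes the argument work: replacing $q$ by $-q$ in the generating function of $pod(n)$ sends $(-q;q^2)_{\infty}$ to $(q;q^2)_{\infty}$ and leaves $(q^2;q^2)_{\infty}$ unchanged (only even powers appear), so $\sum_{j=0}^{\infty}pod(j)(-q)^j = (q;q^2)_{\infty}/(q^2;q^2)_{\infty}$. Combining, we obtain
\begin{equation*}
\left(\sum_{j=0}^{\infty}(-1)^j pod(j)q^j\right)\left(\sum_{n=0}^{\infty}\overline{p}_o(n)q^n\right) = \sum_{n=0}^{\infty}pod(n)q^n.
\end{equation*}

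Finally, applying the Cauchy product on the left-hand side and comparing coefficients of $q^n$ yields the stated formula. There is no real obstacle here — the only thing to be careful about is verifying the sign/evenness behavior under $q\mapsto -q$, which is immediate since $(q^2;q^2)_{\infty}$ involves only even exponents of $q$.
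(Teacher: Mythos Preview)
Your proposal is correct and essentially identical to the paper's proof: both factor the generating function of $pod(n)$ as $\left(\sum_{j\ge0}(-1)^jpod(j)q^j\right)\left(\sum_{n\ge0}\overline{p}_o(n)q^n\right)$ by recognizing that the $q\mapsto-q$ substitution gives the required multiplier, and then equate coefficients. The only cosmetic difference is that you write the multiplier as $(q;q^2)_{\infty}/(q^2;q^2)_{\infty}$ while the paper writes the equivalent $(q;q)_{\infty}/(q^2;q^2)_{\infty}^2$.
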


\begin{proof}
We can write
   \begin{equation*}
       \sum_{n=0}^{\infty}pod(n)q^n = \frac{(q;q)_{\infty}}{(q^2;q^2)_{\infty}^2}\sum_{n=0}^{\infty}\overline{p}_o(n)q^n,
    \end{equation*}
which is equivalent to
    \begin{equation*}
       \sum_{n=0}^{\infty}pod(n)q^n = \left( \sum_{n=0}^{\infty}(-1)^n pod(n)q^n \right) \left( \sum_{n=0}^{\infty}\overline{p}_o(n)q^n \right).
   \end{equation*}
Finally, equating the coefficient of $q^n$ on each side gives the result.
\end{proof}

\begin{theorem}
    For any nonnegative integer $n$, we have
    \begin{equation*}
        \overline{p}_o(n) = \sum_{j=0}^{\infty}pod\left(n-\frac{j(j+1)}{2}\right).
    \end{equation*}
\end{theorem}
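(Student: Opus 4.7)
The plan is to start from the generating function for $\overline{p}_o(n)$ displayed in \eqref{e3.3.3.1} and split off the factor that represents $\psi(q)$. Concretely, using \eqref{e2.0.3}, I would write
\begin{equation*}
\frac{(q^2;q^2)_{\infty}^3}{(q;q)_{\infty}^2(q^4;q^4)_{\infty}} = \frac{(q^2;q^2)_{\infty}}{(q;q)_{\infty}(q^4;q^4)_{\infty}} \cdot \frac{(q^2;q^2)_{\infty}^2}{(q;q)_{\infty}} = \left(\sum_{n=0}^{\infty} pod(n)\, q^n\right)\psi(q).
\end{equation*}
The first factor is exactly the third form of the $pod$-generating function given in the introduction, and the second factor is $\psi(q)$ by \eqref{e2.0.3}.

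Next I would invoke the series expansion $\psi(q)=\sum_{j=0}^{\infty} q^{j(j+1)/2}$ from \eqref{e2.0.3} to rewrite the identity above as
\begin{equation*}
\sum_{n=0}^{\infty}\overline{p}_o(n)\,q^n = \left(\sum_{n=0}^{\infty}pod(n)\,q^n\right)\left(\sum_{j=0}^{\infty} q^{j(j+1)/2}\right).
\end{equation*}
Finally, applying the Cauchy product on the right-hand side and equating coefficients of $q^n$ yields the claimed identity; note that $j(j+1)/2$ is always an integer, so the convention $pod(x)=0$ for non-integer $x$ is not even needed here.

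There is no real obstacle: the key step is simply recognizing the factorization $(q^2;q^2)_\infty^3/((q;q)_\infty^2(q^4;q^4)_\infty)= (\text{gen.\ fn.\ of } pod)\cdot \psi(q)$, after which the result is immediate from the series form of $\psi(q)$ and Cauchy multiplication, exactly as in the proofs of the preceding theorems in Section~4.
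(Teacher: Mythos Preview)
Your argument is correct and follows essentially the same route as the paper: both proofs factor the generating function of $\overline{p}_o(n)$ as the product of the $pod$ generating function and $\psi(q)$, then apply the Cauchy product. The only difference is cosmetic: the paper re-derives the identity $\dfrac{(q^2;q^2)_{\infty}^2}{(q;q)_{\infty}}=\sum_{n\ge 0}q^{n(n+1)/2}$ from the Jacobi triple product before using it, whereas you simply cite it as \eqref{e2.0.3}, which is arguably cleaner since that identity is already recorded in the preliminaries.
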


\begin{proof}
Replacing $x$ by $1$ in $\eqref{e2.0.0}$ and $\eqref{e2.0.1}$, we have the relation
    \begin{equation*}
        (-q;q)_{\infty}(-1;q)_{\infty}(q;q)_{\infty} = \sum_{n=-\infty}^{\infty} q^{n(n+1)/2},
    \end{equation*}
which after simplification takes the shape
    \begin{equation*}
        2(-q;q)_{\infty}^2(q;q)_{\infty} = 2 \sum_{n=0}^{\infty} q^{n(n+1)/2},
    \end{equation*}
which is equivalent to
    \begin{equation*}
        \frac{(q;q^2)_{\infty}(q^2;q^2)_{\infty}}{(q;q^2)_{\infty}^2} = \sum_{n=0}^{\infty} q^{n(n+1)/2}.
    \end{equation*}
Further simplification of the above equation gives the identity
    \begin{equation}
        \frac{(q^2;q^2)_{\infty}^2}{(q;q)_{\infty}} = \sum_{n=0}^{\infty} q^{n(n+1)/2}. \label{e3.3.3.2}
    \end{equation}
Considering the identity $\eqref{e3.3.3.2}$ and the generating function of $\overline{p}_o(n)$, we can write
\begin{equation*}
    \sum_{n=0}^{\infty}\overline{p}_o(n)q^n = \left( \sum_{n=0}^{\infty}pod(n)q^n \right) \frac{(q^2;q^2)_{\infty}^2}{(q;q)_{\infty}},
\end{equation*}
which is equivalent to
\begin{equation*}
   \sum_{n=0}^{\infty}\overline{p}_o(n)q^n = \left( \sum_{n=0}^{\infty}pod(n)q^n \right) \left( \sum_{n=0}^{\infty} q^{n(n+1)/2} \right).
\end{equation*}
Finally, equating the coefficient of $q^n$ on each side gives the result.
\end{proof}

\subsection{Connection with the ordinary partition function and the partition function $q_{e-o}(n)$}
We denote the difference of the number of partitions of $n$ into distinct parts with an even number of odd parts and partitions of $n$ into distinct parts with an odd number of odd parts by $q_{e-o}(n)$. The generating function of $q_{e-o}(n)$ is given by
\begin{equation*}
    \sum_{n=0}^{\infty}q_{e-o}(n) q^n = \frac{1}{(-q;q^2)_{\infty}}.
\end{equation*}

\begin{theorem}
    For any nonnegative integer $n$, the partition functions $p(n)$, $q_{e-o}(n)$ and $pod(n)$ are related by
    \begin{equation*}
        p\left( \frac{n}{2} \right) = \sum_{j=0}^{\infty}pod(j)q_{\text{e$-$o}}(n-j),
    \end{equation*}
with $p(x)=0$ if $x$ is not an integer.
\end{theorem}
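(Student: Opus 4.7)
The plan is to recognize the identity as a straightforward generating-function convolution, analogous in spirit to the other theorems in this section. Observing that $\sum_{n=0}^{\infty} pod(n) q^n = \frac{(-q;q^2)_\infty}{(q^2;q^2)_\infty}$ and $\sum_{n=0}^{\infty} q_{e-o}(n) q^n = \frac{1}{(-q;q^2)_\infty}$, the product of these two series telescopes the factor $(-q;q^2)_\infty$, leaving $\frac{1}{(q^2;q^2)_\infty}$, which is exactly $\sum_{n=0}^{\infty} p(n) q^{2n}$.

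Concretely, I would first record
\begin{equation*}
    \left(\sum_{n=0}^{\infty} pod(n) q^n\right)\left(\sum_{n=0}^{\infty} q_{e-o}(n) q^n\right) = \frac{(-q;q^2)_\infty}{(q^2;q^2)_\infty}\cdot \frac{1}{(-q;q^2)_\infty} = \frac{1}{(q^2;q^2)_\infty}.
\end{equation*}
Next, I would rewrite the right-hand side in terms of $p$ as
\begin{equation*}
    \frac{1}{(q^2;q^2)_\infty} = \sum_{n=0}^{\infty} p(n) q^{2n},
\end{equation*}
so that, after applying the Cauchy product on the left-hand side,
\begin{equation*}
    \sum_{n=0}^{\infty}\left(\sum_{j=0}^{n} pod(j)\, q_{e-o}(n-j)\right) q^n = \sum_{n=0}^{\infty} p(n) q^{2n}.
\end{equation*}
Equating coefficients of $q^n$ and using the convention $p(x)=0$ for non-integral $x$ then yields the stated identity: the right-hand side contributes $p(n/2)$ when $n$ is even and $0$ otherwise, matching exactly the convention imposed.

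There is no real obstacle here; the only thing to watch is choosing the right pair of equivalent forms for the generating functions so that the unwanted factor $(-q;q^2)_\infty$ cancels cleanly, and being careful with the parity convention $p(x)=0$ for non-integer $x$ when extracting coefficients. This places the result in the same family as the preceding theorems of the section, each of which reduces to a one-line generating-function manipulation followed by coefficient extraction.
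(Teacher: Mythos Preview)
Your proof is correct and essentially identical to the paper's own argument: both multiply the generating functions for $pod(n)$ and $q_{e-o}(n)$, observe that the factor $(-q;q^2)_\infty$ cancels to leave $\dfrac{1}{(q^2;q^2)_\infty}=\sum_{n\ge 0}p(n)q^{2n}$, and then extract coefficients.
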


\begin{proof}
We can write
    \begin{equation*}
    \frac{1}{(-q;q^2)_{\infty}} \sum_{n=0}^{\infty}pod(n)q^n  = \frac{1}{(q^2;q^2)_{\infty}},
    \end{equation*}
which is equivalent to
    \begin{equation*}
    \left( \sum_{n=0}^{\infty}q_{e-o}(n) q^n \right) \left( \sum_{n=0}^{\infty}pod(n)q^n \right)  = \sum_{n=0}^{\infty}p(n)q^{2n}. 
\end{equation*}
Finally, equating the coefficient of $q^n$ on each side gives the result.
\end{proof}

\subsection{Connection with the partitions into parts congruent to $2\pmod{4}$}
We denote the number of partitions of $n$ into parts congruent to $2\pmod{4}$ by $P_2^{'}(n)$. It is clear that the generating function of $P_2^{'}(n)$ is given by
\begin{equation*}
    \sum_{n=0}^{\infty}P_2^{'}(n) = \frac{1}{(q^2;q^4)_{\infty}}. 
\end{equation*}

In this section, we will prove a relation connecting the partition functions $pod(n)$, $P_2^{'}(n)$ and $p(n)$.

\begin{theorem}
    For any nonnegative integer $n$, the partition functions $p(n)$, $P_2^{'}(n)$ and $pod(n)$ are related by
    \begin{equation*}
       p(n) = \sum_{j=0}^{\infty}pod(j)P_2^{'}(n-j).
    \end{equation*}
\end{theorem}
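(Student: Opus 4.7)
The plan is to follow the same generating-function strategy used throughout Section 4: assemble the product $\bigl(\sum pod(j) q^j\bigr)\bigl(\sum P_2'(m) q^m\bigr)$ on one side, simplify it to the generating function of $p(n)$, and read off the identity by Cauchy multiplication.

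First I would start from the generating function
\begin{equation*}
\sum_{n=0}^{\infty} pod(n)\,q^n = \frac{(q^2;q^2)_\infty}{(q;q)_\infty\,(q^4;q^4)_\infty}
\end{equation*}
and multiply both sides by $1/(q^2;q^4)_\infty$, which is the generating function of $P_2'(n)$. This gives
\begin{equation*}
\frac{1}{(q^2;q^4)_\infty}\sum_{n=0}^{\infty} pod(n)\,q^n = \frac{(q^2;q^2)_\infty}{(q;q)_\infty\,(q^4;q^4)_\infty\,(q^2;q^4)_\infty}.
\end{equation*}

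Next I would simplify the right-hand side using the splitting of $q$-Pochhammer symbols into odd and even indexed factors, namely $(q^2;q^2)_\infty = (q^2;q^4)_\infty (q^4;q^4)_\infty$. Substituting this identity cancels $(q^4;q^4)_\infty(q^2;q^4)_\infty$ from the denominator with $(q^2;q^2)_\infty$ in the numerator, leaving $1/(q;q)_\infty = \sum_{n \geq 0} p(n) q^n$. Hence
\begin{equation*}
\left(\sum_{n=0}^{\infty} P_2'(n)\,q^n\right)\left(\sum_{n=0}^{\infty} pod(n)\,q^n\right) = \sum_{n=0}^{\infty} p(n)\,q^n.
\end{equation*}

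Finally I would equate coefficients of $q^n$ on both sides via the Cauchy product to conclude $p(n) = \sum_{j=0}^{\infty} pod(j)\,P_2'(n-j)$.

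There is essentially no obstacle here: the argument is a direct Pochhammer manipulation identical in spirit to the earlier subsections. The only step requiring a moment of care is recognizing the factorization $(q^2;q^2)_\infty = (q^2;q^4)_\infty (q^4;q^4)_\infty$, which makes the cancellation transparent; once this is in hand the rest is formal.
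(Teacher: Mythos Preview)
Your proposal is correct and essentially identical to the paper's own proof: both multiply the $pod$ generating function by $1/(q^2;q^4)_\infty$ and simplify to $1/(q;q)_\infty$, then equate coefficients. The only cosmetic difference is that the paper starts from the already-simplified form $\sum pod(n)q^n = (q^2;q^4)_\infty/(q;q)_\infty$ (given in the introduction), so the cancellation is immediate without invoking the factorization $(q^2;q^2)_\infty = (q^2;q^4)_\infty(q^4;q^4)_\infty$ explicitly.
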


\begin{proof}
We can write
\begin{equation*}
   \frac{1}{(q^2;q^4)_{\infty}} \sum_{n=0}^{\infty}pod(n)q^n = \frac{1}{(q;q)_{\infty}}
\end{equation*}
which is equivalent to
\begin{equation*}
   \left( \sum_{n=0}^{\infty}pod(n) q^n \right)\left( \sum_{n=0}^{\infty}P_2^{'}(n)q^n \right) = \sum_{n=0}^{\infty}p(n)q^n.
\end{equation*}
Finally, equating the coefficient of $q^n$ on each side gives the result.
\end{proof}

\subsection{Connections with the ordinary partition function}

In this section, we will prove relations between the partition functions $p(n)$ and $pod(n)$.

\begin{theorem}
     For any nonnegative integer $n$, the partition functions $p(n)$ and $pod(n)$ are related by
    \begin{equation*}
       \sum_{j=-\infty}^{\infty}(-1)^jpod(n-3j^2) = \sum_{j=-\infty}^{\infty}p\left( \frac{n}{4}-\frac{j(3j+1)}{8} \right),
    \end{equation*}
with $p(x)=0$ if $x$ is not an integer.
\end{theorem}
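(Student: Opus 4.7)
The plan is to compare generating functions for the two sides and show they are equal, then equate coefficients of $q^n$, exactly in the style of the previous theorems in this section.

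First, I would handle the left-hand side. The sum $\sum_{j=-\infty}^{\infty}(-1)^j pod(n-3j^2)$ is the coefficient of $q^n$ in the product
\begin{equation*}
\left(\sum_{n=0}^{\infty} pod(n) q^n\right)\left(\sum_{j=-\infty}^{\infty} (-1)^j q^{3j^2}\right).
\end{equation*}
Using the definition of $pod$ together with $\varphi(-q^3) = \sum_{j=-\infty}^{\infty}(-1)^j q^{3j^2} = (q^3;q^3)_{\infty}^2/(q^6;q^6)_{\infty}$ from $\eqref{e2.0.4}$, this product becomes
\begin{equation*}
\frac{(q^2;q^2)_{\infty}(q^3;q^3)_{\infty}^2}{(q;q)_{\infty}(q^4;q^4)_{\infty}(q^6;q^6)_{\infty}}.
\end{equation*}

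Next, I would rewrite the right-hand side as a generating function. Setting $k = n/4 - j(3j+1)/8$ (so $n = 4k + j(3j+1)/2$, which is always a nonnegative integer because $j(3j+1)/2$ is a generalized pentagonal number), the convention $p(x)=0$ for non-integer $x$ lets me write
\begin{equation*}
\sum_{n=0}^{\infty} q^n \sum_{j=-\infty}^{\infty} p\!\left(\tfrac{n}{4}-\tfrac{j(3j+1)}{8}\right) = \left(\sum_{k=0}^{\infty} p(k) q^{4k}\right)\left(\sum_{j=-\infty}^{\infty} q^{j(3j+1)/2}\right) = \frac{1}{(q^4;q^4)_{\infty}}\cdot\frac{(-q;q)_{\infty}(q^3;q^3)_{\infty}}{(-q^3;q^3)_{\infty}},
\end{equation*}
where in the last step I apply Baruah's identity $\eqref{e2.1.3}$.

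It then remains to verify the infinite-product identity
\begin{equation*}
\frac{(q^2;q^2)_{\infty}(q^3;q^3)_{\infty}^2}{(q;q)_{\infty}(q^4;q^4)_{\infty}(q^6;q^6)_{\infty}} = \frac{(-q;q)_{\infty}(q^3;q^3)_{\infty}}{(q^4;q^4)_{\infty}(-q^3;q^3)_{\infty}}.
\end{equation*}
After canceling the common factors $(q^4;q^4)_{\infty}$ and $(q^3;q^3)_{\infty}$, this reduces to checking
\begin{equation*}
\frac{(q^2;q^2)_{\infty}(q^3;q^3)_{\infty}}{(q;q)_{\infty}(q^6;q^6)_{\infty}} = \frac{(-q;q)_{\infty}}{(-q^3;q^3)_{\infty}},
\end{equation*}
which follows instantly from the two basic identities $(-q;q)_{\infty} = (q^2;q^2)_{\infty}/(q;q)_{\infty}$ and $(-q^3;q^3)_{\infty} = (q^6;q^6)_{\infty}/(q^3;q^3)_{\infty}$. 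I expect this product manipulation to be the only step requiring any care; everything else is a direct application of the previously stated identities. Equating coefficients of $q^n$ in the two generating-function expressions then yields the claimed identity.
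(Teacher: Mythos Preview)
Your argument is correct and is essentially the same as the paper's: both rely on Baruah's identity \eqref{e2.1.3} for $\sum q^{j(3j+1)/2}$, the product form \eqref{e2.0.4} for $\varphi(-q^3)$, and the elementary simplification $(-q;q)_\infty = (q^2;q^2)_\infty/(q;q)_\infty$ (and its $q\mapsto q^3$ analogue) to equate the two generating functions before extracting the coefficient of $q^n$. The only cosmetic difference is that the paper starts from \eqref{e2.1.3} and massages it into the final identity, whereas you compute each side's product form separately and then check they agree.
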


\begin{proof}
Multiplying $\eqref{e2.1.3}$ by $(q;q)_{\infty}(q^3;q^3)_{\infty}/(q;q)_{\infty}(q^3;q^3)_{\infty}$, to obtain
\begin{equation*}
         \frac{(q^2;q^2)_{\infty}(q^3;q^3)_{\infty}^2}{(q;q)_{\infty}(q^6;q^6)_{\infty}} = \sum_{n=-\infty}^{\infty}q^{n(3n+1)/2}.
\end{equation*}
Multiplying the above equation by $1/(q^4;q^4)_{\infty}$, to get
\begin{equation*}
    \frac{(q^3;q^3)_{\infty}^2}{(q^6;q^6)_{\infty}}\frac{(q^2;q^2)_{\infty}}{(q;q)_{\infty}(q^4;q^4)_{\infty}}  = \frac{1}{(q^4;q^4)_{\infty}} \sum_{j=-\infty}^{\infty}q^{n(3n+1)/2},
\end{equation*}
which is equivalent to
\begin{equation*}
    \left( \sum_{n=-\infty}^{\infty}(-1)^nq^{3n^2} \right) \left( \sum_{n=0}^{\infty}pod(n)q^n \right)  = \left( \sum_{n=0}^{\infty} p(n)q^{4n} \right) \left( \sum_{n=-\infty}^{\infty} q^{n(3n+1)/2} \right).
\end{equation*}
Finally, equating the coefficient of $q^n$ on each side gives the result.
\end{proof}

\begin{theorem}
     For any nonnegative integer $n$, the partition functions $p(n)$ and $pod(n)$ are related by
    \begin{equation*}
       \sum_{j=0}^{\infty}pod(j)p\left( \frac{n}{2} - \frac{j}{2} \right) =  \sum_{j=0}^{\infty}p(j)p\left( \frac{n}{4} - \frac{j}{4} \right),
    \end{equation*}
with $p(x)=0$ if $x$ is not an integer.
\end{theorem}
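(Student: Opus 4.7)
The plan is to interpret both sides of the identity as coefficients of $q^n$ in suitable Cauchy products and then observe that the corresponding generating functions are equal as formal power series.

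First, I would rewrite the left-hand side. Because $p(x)=0$ unless $x\in\mathbb{N}_0$, the sum $\sum_{j=0}^{\infty} pod(j)\, p((n-j)/2)$ is exactly the coefficient of $q^n$ in the Cauchy product
\begin{equation*}
\left( \sum_{j=0}^{\infty} pod(j)\,q^{j} \right)\left( \sum_{m=0}^{\infty} p(m)\,q^{2m} \right),
\end{equation*}
where we have set $m=(n-j)/2$. Using the generating function $\sum pod(j)q^j = (q^2;q^2)_\infty/[(q;q)_\infty(q^4;q^4)_\infty]$ given in the introduction and $\sum p(m)q^{2m} = 1/(q^2;q^2)_\infty$, this product telescopes to
\begin{equation*}
\frac{(q^2;q^2)_\infty}{(q;q)_\infty(q^4;q^4)_\infty}\cdot\frac{1}{(q^2;q^2)_\infty}=\frac{1}{(q;q)_\infty(q^4;q^4)_\infty}.
\end{equation*}

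Next, I would handle the right-hand side in the same way: with $m=(n-j)/4$, the sum $\sum_{j=0}^{\infty} p(j)\,p((n-j)/4)$ is the coefficient of $q^n$ in
\begin{equation*}
\left( \sum_{j=0}^{\infty} p(j)\,q^{j} \right)\left( \sum_{m=0}^{\infty} p(m)\,q^{4m} \right)=\frac{1}{(q;q)_\infty}\cdot\frac{1}{(q^4;q^4)_\infty},
\end{equation*}
which is the same infinite product as obtained on the left. Equating coefficients of $q^n$ on both sides then yields the claimed identity.

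There is essentially no obstacle; the whole argument is a direct Cauchy-product computation resting on the factorization $(q^2;q^2)_\infty$ that appears in the $pod$ generating function, which cancels the $(q^2;q^2)_\infty$ in the denominator of $\sum p(m)q^{2m}$. The only point requiring a little care is to make sure the boundary condition $p(x)=0$ for non-integer $x$ is explicitly invoked so that the index $m=(n-j)/2$ (resp.\ $m=(n-j)/4$) is automatically forced to run over nonnegative integers, turning the infinite sum over $j$ into a genuine (finite) Cauchy convolution.
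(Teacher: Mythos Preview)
Your proof is correct and is essentially identical to the paper's own argument: both recognize each side as the coefficient of $q^n$ in a Cauchy product and reduce the identity to the equality of generating functions $\dfrac{(q^2;q^2)_\infty}{(q;q)_\infty(q^4;q^4)_\infty}\cdot\dfrac{1}{(q^2;q^2)_\infty}=\dfrac{1}{(q;q)_\infty}\cdot\dfrac{1}{(q^4;q^4)_\infty}$. The only cosmetic difference is that the paper starts from the product identity and extracts coefficients, whereas you start from the sums and build the products; the content is the same.
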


\begin{proof}
We can write
\begin{equation*}
    \frac{1}{(q^2;q^2)_{\infty}} \sum_{n=0}^{\infty}pod(n)q^n = \frac{1}{(q;q)_{\infty}(q^4;q^4)_{\infty}}
\end{equation*}
which is equivalent to
\begin{equation*}
    \left( \sum_{n=0}^{\infty}pod(n)q^n \right)\left( \sum_{n=0}^{\infty}p(n)q^{2n} \right) = \left( \sum_{n=0}^{\infty}p(n)q^n \right)\left( \sum_{n=0}^{\infty}p(n)q^{4n} \right).
\end{equation*}
Finally, equating the coefficient of $q^n$ on each side gives the result.
\end{proof}

\subsection{Connections with the cubic partition function}
In $2010$, Chan \cite{7} introduced the cubic partition function $a(n)$ which counts the number of partitions of $n$ where the even parts can appear in two colors. For example, there are four cubic partitions of $3$, namely
\begin{equation*}
    3, \quad 2_1+1, \quad 2_2+1, \quad 1+1+1,
\end{equation*}
where the subscripts $1$ and $2$ denote the colors. The generating function of $a(n)$ satisfies the identity
\begin{equation*}
    \sum_{n=0}^{\infty}a(n)q^n = \frac{1}{(q;q)_{\infty}(q^2;q^2)_{\infty}}.
\end{equation*}
In this section, we shall prove that the partition function $pod(n)$ can be expressed in terms of the partition function $a(n)$.

\begin{theorem}
For any nonnegative integer $n$, the partition functions $a(n)$ and $pod(n)$ are related by
    \begin{equation}\label{e4.1.0.1}
      pod(n) = \sum_{j=-\infty}^{\infty} (-1)^ja(n-2j^2).
    \end{equation}
\end{theorem}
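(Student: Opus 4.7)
The plan is to reduce the identity to a simple generating-function manipulation and then appeal to one of the special theta identities already recorded in the Preliminaries, namely $\varphi(-q)$ from \eqref{e2.0.4}. Concretely, the right-hand side of \eqref{e4.1.0.1}, once packaged as a Cauchy product, is the $q^n$ coefficient of
\begin{equation*}
\left(\sum_{j=-\infty}^{\infty}(-1)^j q^{2j^2}\right)\left(\sum_{n=0}^{\infty}a(n)q^n\right),
\end{equation*}
so the whole theorem is equivalent to the single generating-function equality
\begin{equation*}
\frac{(q^2;q^2)_{\infty}}{(q;q)_{\infty}(q^4;q^4)_{\infty}}
=\left(\sum_{j=-\infty}^{\infty}(-1)^j q^{2j^2}\right)\cdot\frac{1}{(q;q)_{\infty}(q^2;q^2)_{\infty}}.
\end{equation*}

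First I would clear denominators to rewrite this as
\begin{equation*}
\sum_{j=-\infty}^{\infty}(-1)^j q^{2j^2}
=\frac{(q^2;q^2)_{\infty}^2}{(q^4;q^4)_{\infty}}.
\end{equation*}
The right-hand side is precisely $\varphi(-q^2)$: applying \eqref{e2.0.4} with $q$ replaced by $q^2$ gives $\varphi(-q^2)=\sum_{n=-\infty}^{\infty}(-1)^n q^{2n^2}=(q^2;q^2)_{\infty}^2/(q^4;q^4)_{\infty}$, which is exactly what is needed. Equivalently, this is just the Jacobi triple product \eqref{e0.1} with $q\mapsto q^2$ and $x=-1$ after routine simplification.

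With this identity in hand, the presentation of the proof is the standard pattern used throughout the paper: start from the generating function $\sum a(n)q^n=1/((q;q)_{\infty}(q^2;q^2)_{\infty})$, multiply both sides by $\varphi(-q^2)=(q^2;q^2)_{\infty}^2/(q^4;q^4)_{\infty}$, recognise the resulting product on the right as the pod generating function $(q^2;q^2)_{\infty}/((q;q)_{\infty}(q^4;q^4)_{\infty})$, and then expand $\varphi(-q^2)$ as the theta series $\sum_{j\in\mathbb{Z}}(-1)^j q^{2j^2}$ on the left. Applying Cauchy multiplication and comparing coefficients of $q^n$ yields \eqref{e4.1.0.1}.

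There is no real obstacle here; the only point requiring any attention is to make sure the correct infinite-product identity for $\varphi(-q^2)$ is invoked (one could alternatively write it as $(q^2;q^4)_{\infty}^2(q^4;q^4)_{\infty}$ via \eqref{e2.0.1} and simplify, but using \eqref{e2.0.4} directly is cleanest). Everything else is a verbatim repetition of the generating-function-plus-Cauchy-product template already exhibited in the previous subsections.
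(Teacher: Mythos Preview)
Your proposal is correct and follows essentially the same approach as the paper: both establish the identity $\sum_{j\in\mathbb{Z}}(-1)^j q^{2j^2}=(q^2;q^2)_{\infty}^2/(q^4;q^4)_{\infty}$ and then multiply the generating function of $a(n)$ by this to obtain the pod generating function before comparing coefficients. The only cosmetic difference is that the paper derives the theta identity directly from the Jacobi triple product~\eqref{e0.1} in the form $(q^2;q^4)_{\infty}^2(q^4;q^4)_{\infty}$ (your stated ``alternative''), whereas you invoke the pre-recorded~\eqref{e2.0.4} with $q\mapsto q^2$; these are of course the same identity.
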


\begin{proof}
Replacing $q$ by $-q^2$ and $x$ by $-q^2$ in $\eqref{e0.1}$, we have
\begin{equation*}
    (q^2;q^4)_{\infty}(q^2;q^4)_{\infty}(q^4;q^4)_{\infty}  = \sum_{n=-\infty}^{\infty}(-1)^nq^{2n^2}.
\end{equation*}
Multiplying the above equation by $(q^4;q^4)_{\infty}/(q;q)_{\infty}$, to get
\begin{equation*}
    \frac{(q^2;q^2)_{\infty}^2}{(q;q)_{\infty}} = \frac{(q^4;q^4)_{\infty}}{(q;q)_{\infty}}\sum_{n=-\infty}^{\infty}(-1)^nq^{2n^2},
\end{equation*}
which after simplification becomes
\begin{equation*}
    \frac{(q^2;q^2)_{\infty}}{(q;q)_{\infty}(q^4;q^4)_{\infty}} = \frac{1}{(q;q)_{\infty}(q^2;q^2)_{\infty}}\sum_{n=-\infty}^{\infty}(-1)^nq^{2n^2},
\end{equation*}
which is equivalent to
\begin{equation*}
    \sum_{n=0}^{\infty}pod(n)q^n = \left( \sum_{n=0}^{\infty}a(n)q^n \right)\left( \sum_{n=-\infty}^{\infty}(-1)^nq^{2n^2} \right).
\end{equation*}
Finally, equating the coefficient of $q^n$ on each side gives the result.
\end{proof}

\begin{corollary}
For any nonnegative integer $n$, we have
    \begin{equation*}
        pod(n) \equiv a(n) \pmod{2}.
    \end{equation*}
\end{corollary}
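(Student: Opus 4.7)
The plan is to reduce the identity
\begin{equation*}
pod(n) = \sum_{j=-\infty}^{\infty} (-1)^j a(n-2j^2)
\end{equation*}
established in the preceding theorem modulo $2$. Since $(-1)^j \equiv 1 \pmod{2}$, the congruence becomes
\begin{equation*}
pod(n) \equiv \sum_{j=-\infty}^{\infty} a(n-2j^2) \pmod{2}.
\end{equation*}

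Next, I would pair up the index $j$ with $-j$. The exponent $2j^2$ depends only on $|j|$, so the terms indexed by $j$ and $-j$ are equal, and for every $j \neq 0$ they contribute $2\,a(n-2j^2) \equiv 0 \pmod{2}$. The only term that is not paired off is the one corresponding to $j=0$, which contributes $a(n)$. Collecting these observations yields $pod(n) \equiv a(n) \pmod{2}$, as desired.

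I do not expect any real obstacle here; the statement is a direct mod-$2$ consequence of the theorem just proved, the only subtle point being the symmetry $j \leftrightarrow -j$ of the quadratic exponent $2j^2$, which kills all but the central term in the theta-like sum.
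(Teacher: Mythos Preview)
Your argument is correct and matches the paper's own proof: the paper rewrites the theorem's identity as $pod(n)=a(n)+2\sum_{j\geq 1}(-1)^{j}a(n-2j^{2})$ and reduces modulo~$2$, which is exactly your pairing of $j$ with $-j$. The preliminary step of replacing $(-1)^{j}$ by $1$ modulo~$2$ is harmless but unnecessary, since the pairing already produces the factor of~$2$.
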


\begin{proof}
    Equation $\eqref{e4.1.0.1}$ can be rewritten as
    \begin{equation*}
        pod(n)=a(n)+2\sum_{j=1}^{\infty}(-1)^{j}a(n-2j^2),
    \end{equation*}
which under modulo $2$, gives the result.
\end{proof}

\begin{theorem}
For any nonnegative integer $n$, the partition functions $a(n)$ and $pod(n)$ are related by triangular numbers as
    \begin{equation*}
      pod(n) = \sum_{j=0}^{\infty} a\left( \frac{n}{2}-\frac{j(j+1)}{4} \right),
    \end{equation*}
with $a(x)=0$ if $x$ is not an integer. 
\end{theorem}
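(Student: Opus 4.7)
The plan is to mimic the generating-function template used throughout this section: rewrite the generating function of $pod(n)$ as a product of the generating function of $a(n)$ (in the variable $q^2$) and a theta-type series whose exponents are triangular numbers, then read off the identity by comparing coefficients of $q^n$.

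First, I would recall the two generating function shapes already on the table:
\begin{equation*}
\sum_{n=0}^{\infty} pod(n)q^n = \frac{(q^2;q^2)_{\infty}}{(q;q)_{\infty}(q^4;q^4)_{\infty}}, \qquad \sum_{n=0}^{\infty} a(n)q^n = \frac{1}{(q;q)_{\infty}(q^2;q^2)_{\infty}},
\end{equation*}
so that replacing $q$ by $q^2$ in the second one gives
\begin{equation*}
\sum_{n=0}^{\infty} a(n)q^{2n} = \frac{1}{(q^2;q^2)_{\infty}(q^4;q^4)_{\infty}}.
\end{equation*}
The natural bridge to the $pod$ side is then the factor $(q^2;q^2)_{\infty}^2/(q;q)_{\infty}$, which is exactly $\psi(q)$ by \eqref{e2.0.3} (equivalently, the already-derived identity \eqref{e3.3.3.2}). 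A one-line computation,
\begin{equation*}
\frac{(q^2;q^2)_{\infty}^2}{(q;q)_{\infty}} \cdot \frac{1}{(q^2;q^2)_{\infty}(q^4;q^4)_{\infty}} = \frac{(q^2;q^2)_{\infty}}{(q;q)_{\infty}(q^4;q^4)_{\infty}},
\end{equation*}
shows that
\begin{equation*}
\sum_{n=0}^{\infty} pod(n)q^n = \left( \sum_{n=0}^{\infty} a(n) q^{2n} \right)\left( \sum_{j=0}^{\infty} q^{j(j+1)/2} \right).
\end{equation*}

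Finally I would extract the coefficient of $q^n$ on both sides via Cauchy multiplication: the contribution comes from all pairs $(j,k)$ with $2k + j(j+1)/2 = n$, i.e.\ $k = n/2 - j(j+1)/4$, yielding
\begin{equation*}
pod(n) = \sum_{j=0}^{\infty} a\!\left( \frac{n}{2} - \frac{j(j+1)}{4} \right),
\end{equation*}
under the stated convention $a(x)=0$ for non-integer $x$.

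There is no real obstacle here; the only mildly delicate point is being careful about the half-integer indexing, since for each parity of $n$ only the $j$'s of a compatible residue class mod $4$ contribute. The convention $a(x)=0$ for non-integer $x$ silently handles this, so the final display is legitimate as written. The heart of the proof is identifying the correct auxiliary factor $\psi(q)$, which is already available in the preliminaries.
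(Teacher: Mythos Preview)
Your proof is correct and follows essentially the same route as the paper: factor the generating function of $pod(n)$ as $\psi(q)\cdot \dfrac{1}{(q^2;q^2)_\infty(q^4;q^4)_\infty}$, recognise the second factor as $\sum a(n)q^{2n}$, and compare coefficients. The paper states this factorisation in one line without the intermediate algebra, but the argument is the same.
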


\begin{proof}
    The generation function of $pod(n)$ can be written as
    \begin{equation*}
        \sum_{n=0}^{\infty}pod(n)q^n = \psi(q)\frac{1}{(q^2;q^2)_{\infty}(q^4;q^4)_{\infty}},
    \end{equation*}
which is equivalent to
    \begin{align*}
        \sum_{n=0}^{\infty}pod(n)q^n & = \left( \sum_{n=0}^{\infty}q^{T_n} \right) \left( \sum_{n=0}^{\infty}a(n)q^{2n} \right)
    \end{align*}
Finally, equating the coefficient of $q^n$ on each side gives the result.
\end{proof}

\subsection{Connection with the partition function $q_{odd}^{'''}(n)$}
The number of partitions of $n$ into distinct parts is usually denoted by $q(n)$. The number of partitions of $n$ into distinct odd parts is denoted in this paper by $q_{odd}(n)$. The generating functions for $q(n)$ and $q_{odd}(n)$ are given by
\begin{equation*}
    \sum_{n=0}^{\infty}q(n)q^n = (-q;q)_{\infty}
\end{equation*}
and
\begin{equation*}
    \sum_{n=0}^{\infty}q_{odd}(n)q^n = (-q;q^2)_{\infty}.
\end{equation*}
We denote in this paper, $q_{odd}^{'''}(n)$ to be the number of partitions of $n$ into distinct odd parts in $3$ colors. For example, we have $q_{odd}^{'''}(4)=9$, and the nine partitions are
\begin{equation*}
    3_1+1_1, \quad 3_2+1_2, \quad 3_3+1_3, \quad 3_1+1_2, \quad 3_1+1_3, \quad 3_2+1_1, \quad 3_3+1_1, \quad 3_2+1_3, \quad 3_3+1_2,
\end{equation*}
where the subscripts $1$, $2$ and $3$ denote the colors. It is clear that the generating function of $q_{odd}^{'''}(n)$ is given by
\begin{equation*}
    \sum_{n=0}^{\infty}q_{odd}^{'''}(n) = (-q;q^2)^3. 
\end{equation*}
In this section, we shall prove that the partition function $q_{odd}^{'''}(n)$ can be expressed in terms of the partition function $pod(n)$.

\begin{theorem}
    For any nonnegative integer $n$, the partition functions $q_{\text{odd}}^{'''}(n)$ and $pod(n)$ are related by
    \begin{equation}\label{e4.1.0.2}
      q_{\text{odd}}^{'''}(n) = \sum_{j=-\infty}^{\infty} pod(n-j^2).
    \end{equation}
\end{theorem}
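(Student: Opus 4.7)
The plan is to prove this by a direct generating function manipulation, using the product representation of $\varphi(q)$ from equation \eqref{e2.0.2}. Specifically, from \eqref{e2.0.2} we have
\begin{equation*}
\varphi(q) = \sum_{n=-\infty}^{\infty} q^{n^2} = (-q;q^2)_{\infty}^2 (q^2;q^2)_{\infty}.
\end{equation*}
This is exactly the factor that converts the generating function of $pod(n)$ into the generating function of $q_{\text{odd}}^{'''}(n)$.

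First I would start from the generating function
\begin{equation*}
\sum_{n=0}^{\infty} pod(n) q^n = \frac{(-q;q^2)_{\infty}}{(q^2;q^2)_{\infty}}
\end{equation*}
and multiply both sides by $\varphi(q)$. Using the product form of $\varphi(q)$ above, the right-hand side simplifies to
\begin{equation*}
\frac{(-q;q^2)_{\infty}}{(q^2;q^2)_{\infty}} \cdot (-q;q^2)_{\infty}^2 (q^2;q^2)_{\infty} = (-q;q^2)_{\infty}^3,
\end{equation*}
which is precisely the generating function of $q_{\text{odd}}^{'''}(n)$.

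Then I would rewrite the resulting identity as the Cauchy product
\begin{equation*}
\sum_{n=0}^{\infty} q_{\text{odd}}^{'''}(n) q^n = \left( \sum_{n=0}^{\infty} pod(n) q^n \right)\left( \sum_{j=-\infty}^{\infty} q^{j^2} \right),
\end{equation*}
and extract the coefficient of $q^n$ on both sides to obtain the claimed formula. There is essentially no obstacle here; the whole argument is a one-line identity once the correct factorisation of $\varphi(q)$ is invoked, and it follows exactly the same template as the preceding theorems in this section.
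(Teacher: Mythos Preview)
Your proposal is correct and follows essentially the same approach as the paper: the paper sets $x=1$ in the Jacobi triple product \eqref{e0.1} to obtain $(-q;q^2)_\infty^2(q^2;q^2)_\infty=\sum_{n=-\infty}^{\infty}q^{n^2}$, multiplies by $(-q;q^2)_\infty/(q^2;q^2)_\infty$, and equates coefficients --- exactly your argument, except that you cite the product form of $\varphi(q)$ directly from \eqref{e2.0.2} rather than re-deriving it.
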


\begin{proof}
Replacing $x$ by $1$ in $\eqref{e0.1}$, we have
\begin{equation*}
    (-q;q^2)^2_{\infty}(q^2;q^2)_{\infty} = \sum_{n=-\infty}^{\infty}q^{n^2}.
\end{equation*}
Multiplying the above equation by $(-q;q^2)_{\infty}/(q^2;q^2)_{\infty}$, to get
\begin{equation*}
    (-q;q^2)^3_{\infty} = \frac{(-q;q^2)_{\infty}}{(q^2;q^2)_{\infty}}\sum_{n=-\infty}^{\infty}q^{n^2},
\end{equation*}
which is equivalent to
\begin{equation*}
    \sum_{n=0}^{\infty}q_{odd}^{'''}(n) q^n = \left( \sum_{n=0}^{\infty}pod(n)q^n \right)\left( \sum_{n=-\infty}^{\infty} q^{n^2} \right).
\end{equation*}
Finally, equating the coefficient of $q^n$ on each side gives the result.
\end{proof}

\begin{corollary}
    For any nonnegative integer $n$, we have
    \begin{equation*}
        pod(n) \equiv q_{\text{odd}}^{'''}(n) \pmod{2}
    \end{equation*}
\end{corollary}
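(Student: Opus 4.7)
The plan is to deduce this congruence directly from the preceding theorem, which gives the exact identity
\begin{equation*}
q_{\text{odd}}^{'''}(n) = \sum_{j=-\infty}^{\infty} pod(n-j^2).
\end{equation*}
The key observation is that the summand $pod(n-j^2)$ depends on $j$ only through $j^2$, so the terms at $j$ and $-j$ coincide for every $j \neq 0$. This lets me split off the $j=0$ contribution and fold the remaining pairs together, exactly mimicking the strategy used in the preceding corollary (where $pod(n) \equiv a(n) \pmod 2$ was derived from the analogous identity involving $2j^2$).

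Concretely, I would rewrite the theorem's identity as
\begin{equation*}
q_{\text{odd}}^{'''}(n) = pod(n) + 2\sum_{j=1}^{\infty} pod(n-j^2),
\end{equation*}
with the convention $pod(x)=0$ for $x<0$, which makes the sum on the right finite. Reducing both sides modulo $2$ immediately kills the doubled sum and yields $pod(n) \equiv q_{\text{odd}}^{'''}(n) \pmod 2$, as desired.

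There is essentially no obstacle here; the work has already been done in establishing the exact identity, and the corollary is only a parity bookkeeping step. The only point worth being careful about is noting that $j \mapsto -j$ is a fixed-point-free involution on $\mathbb{Z}\setminus\{0\}$ preserving $j^2$, which is what justifies the factor of $2$ in the grouping.
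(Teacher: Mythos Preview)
Your proposal is correct and is essentially identical to the paper's own proof: the paper rewrites equation~\eqref{e4.1.0.2} as $q_{\text{odd}}^{'''}(n)=pod(n)+2\sum_{j=1}^{\infty}pod(n-j^2)$ and reduces modulo~$2$.
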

\begin{proof}
    Equation $\eqref{e4.1.0.2}$ can be rewritten as
    \begin{equation*}
        q_{\text{odd}}^{'''}(n)=pod(n)+2\sum_{j=1}^{\infty}pod(n-j^2),
    \end{equation*}
which under modulo $2$, gives the result.
\end{proof}

\subsection{Relation between the partition functions $pod(n)$, $p(n)$, $a(n)$ and $q_{odd}(n)$}
In this section, we shall prove a relation between the partition functions $pod(n)$, $p(n)$, $a(n)$ and $q_{odd}(n)$.
\begin{theorem}
    For any nonnegative integer $n$, the partition functions $pod(n)$, $p(n)$, $a(n)$ and $q_{\text{odd}}(n)$ are related by
    \begin{equation*}
     \sum_{j=0}^{\infty}pod(j)p(n-j) = \sum_{j=0}^{\infty}a(j)q_{\text{odd}}(n-j).
    \end{equation*}
\end{theorem}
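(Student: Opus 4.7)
The plan is to follow the same generating-function template used throughout the paper: identify a single product of $q$-Pochhammer symbols that admits two different factorizations, one matching the left-hand side and one matching the right-hand side, and then invoke Cauchy's product to read off coefficients.

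The most convenient starting point is the form $\sum_{n=0}^{\infty} pod(n)q^n = (-q;q^2)_{\infty}/(q^2;q^2)_{\infty}$. Multiplying both sides by $1/(q;q)_{\infty}$, which is the generating function of $p(n)$, produces
\begin{equation*}
\left(\sum_{n=0}^{\infty} pod(n)q^n\right)\left(\sum_{n=0}^{\infty} p(n)q^n\right) = \frac{(-q;q^2)_{\infty}}{(q;q)_{\infty}(q^2;q^2)_{\infty}}.
\end{equation*}
The next step is simply to regroup the right-hand side as
\begin{equation*}
\frac{(-q;q^2)_{\infty}}{(q;q)_{\infty}(q^2;q^2)_{\infty}} = (-q;q^2)_{\infty} \cdot \frac{1}{(q;q)_{\infty}(q^2;q^2)_{\infty}},
\end{equation*}
which by the stated generating functions is exactly $\left(\sum q_{\text{odd}}(n)q^n\right)\left(\sum a(n)q^n\right)$.

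Once both sides are written as Cauchy products, equating the coefficient of $q^n$ immediately gives the claimed identity. Strictly speaking there is no real obstacle, since the factorization of $(-q;q^2)_{\infty}/(q;q)_{\infty}(q^2;q^2)_{\infty}$ into $pod\cdot p$ versus $a\cdot q_{\text{odd}}$ is essentially a one-line rearrangement; the only point requiring mild care is ensuring consistency of the two different product forms of the generating function for $pod(n)$ recorded in the introduction, so that one does not inadvertently introduce an extra factor of $(q^4;q^4)_{\infty}$ when comparing the two sides.
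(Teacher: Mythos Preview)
Your proposal is correct and follows essentially the same route as the paper: both start from $\sum pod(n)q^n = (-q;q^2)_{\infty}/(q^2;q^2)_{\infty}$, divide by $(q;q)_{\infty}$, and then regroup the resulting product $(-q;q^2)_{\infty}/\bigl((q;q)_{\infty}(q^2;q^2)_{\infty}\bigr)$ as the product of the generating functions for $q_{\text{odd}}(n)$ and $a(n)$ before equating coefficients. The paper phrases the first step as multiplying by $(q;q)_{\infty}(q^2;q^2)_{\infty}/\bigl((q;q)_{\infty}(q^2;q^2)_{\infty}\bigr)$, but this is only cosmetic.
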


\begin{proof}
We can write
\begin{equation*}
    \sum_{n=0}^{\infty}pod(n)q^n = \frac{(-q;q^2)_{\infty}}{(q^2;q^2)_{\infty}}\frac{(q;q)_{\infty}(q^2;q^2)_{\infty}}{(q;q)_{\infty}(q^2;q^2)_{\infty}},
\end{equation*}
which on simplification becomes
\begin{equation*}
    \frac{1}{(q;q)_{\infty}} \sum_{n=0}^{\infty}pod(n)q^n = (-q;q^2)_{\infty} \sum_{n=0}^{\infty}a(n)q^n ,
\end{equation*}
which is equivalent to
\begin{equation*}
    \left( \sum_{n=0}^{\infty}p(n)q^n \right)\left( \sum_{n=0}^{\infty}pod(n)q^n \right) = \left( \sum_{n=0}^{\infty} q_{odd}(n)q^n \right)\left( \sum_{n=0}^{\infty}a(n)q^n \right).
\end{equation*}
Finally, equating the coefficient of $q^n$ on each side gives the result.
\end{proof}

\subsection{Relation between the partition functions $p(n), \overline{\mathcal{E}\mathcal{O}}(n)$ and $pod(n)$}
Andrews \cite{6} introduced the partition function $\mathcal{E}\mathcal{O}(n)$ counts the number of partitions of $n$ where every even part is less than each odd part. For example, $\mathcal{E}\mathcal{O}(8)=12$ with the relevant partitions being 
\begin{equation*}
    8, \quad 6+2, \quad 7+1, \quad 4+4, \quad 4+2+2, \quad 5+3, \quad 5+1+1+1, \quad 2+2+2+2, \quad 3+3+2,
\end{equation*}
\begin{equation*}
    \quad 3+3+1+1, \quad 3+1+1+1+1+1, \quad  1+1+1+1+1+1+1+1.
\end{equation*}
The generating function for $\mathcal{E}\mathcal{O}(n)$ is
\begin{equation*}
    \sum_{n=0}^{\infty}\mathcal{E}\mathcal{O}(n)q^n = \frac{1}{(1-q)(q^2;q^2)_{\infty}}.
\end{equation*}
The partition function $\overline{\mathcal{E}\mathcal{O}}(n)$ is the number of partitions counted by $\mathcal{E}\mathcal{O}(n)$ in which only the largest even part appears an odd number of times. For example, $\overline{\mathcal{E}\mathcal{O}}(8)=5$, with the relevent partitions being
\begin{equation*}
    8, \quad 4+2+2, \quad 3+3+2, \quad 3+3+1+1, \quad 1+1+1+1+1+1+1+1.
\end{equation*}
The generating function of  $\overline{\mathcal{E}\mathcal{O}}(n)$ is 
\begin{equation*}
    \sum_{n=0}^{\infty}\overline{\mathcal{E}\mathcal{O}}(n)q^n = \frac{(q^4;q^4)_{\infty}}{(q^2;q^4)_{\infty}^2} = \frac{(q^4;q^4)_{\infty}^3}{(q^2;q^2)_{\infty}^2}.
\end{equation*}

In this section, we will prove a relation between the partition functions $p(n), \overline{\mathcal{E}\mathcal{O}}(n)$ and $pod(n)$.

\begin{theorem}
    For any nonnegative integer $n$, the partition functions $pod(n)$, $p(n)$, and $\overline{\mathcal{E}\mathcal{O}}(n)$ are related by
    \begin{equation*}
    \sum_{j=0}^{\infty}p(n-2T_j) = \sum_{j=0}^{\infty}pod(j)\overline{\mathcal{E}\mathcal{O}}(n-j).
    \end{equation*}
\end{theorem}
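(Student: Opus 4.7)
The plan is to mimic the pattern used throughout Section 4: express both sides as coefficients of a single generating function and then equate. Concretely, I would start by noting that, since $\psi(q)=\sum_{n=0}^{\infty}q^{T_n}$ by $\eqref{e2.0.3}$, replacing $q$ by $q^2$ gives
\begin{equation*}
\psi(q^2)=\sum_{n=0}^{\infty}q^{2T_n}=\frac{(q^4;q^4)_{\infty}^2}{(q^2;q^2)_{\infty}}.
\end{equation*}

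Next I would multiply the generating function of $p(n)$ by $\psi(q^2)$ to obtain
\begin{equation*}
\left(\sum_{n=0}^{\infty}p(n)q^n\right)\left(\sum_{j=0}^{\infty}q^{2T_j}\right)=\frac{1}{(q;q)_{\infty}}\cdot\frac{(q^4;q^4)_{\infty}^2}{(q^2;q^2)_{\infty}}.
\end{equation*}
The right-hand side is the heart of the argument: I would insert a factor of $(q^4;q^4)_{\infty}(q^2;q^2)_{\infty}$ over itself to split it as
\begin{equation*}
\frac{(q^4;q^4)_{\infty}^2}{(q;q)_{\infty}(q^2;q^2)_{\infty}}=\frac{(q^2;q^2)_{\infty}}{(q;q)_{\infty}(q^4;q^4)_{\infty}}\cdot\frac{(q^4;q^4)_{\infty}^3}{(q^2;q^2)_{\infty}^2},
\end{equation*}
which I recognise as the product of the generating functions of $pod(n)$ and $\overline{\mathcal{E}\mathcal{O}}(n)$.

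Therefore the identity reduces to
\begin{equation*}
\left(\sum_{n=0}^{\infty}p(n)q^n\right)\left(\sum_{j=0}^{\infty}q^{2T_j}\right)=\left(\sum_{n=0}^{\infty}pod(n)q^n\right)\left(\sum_{n=0}^{\infty}\overline{\mathcal{E}\mathcal{O}}(n)q^n\right),
\end{equation*}
and comparing the coefficients of $q^n$ via Cauchy multiplication on both sides yields the stated formula. I do not expect any real obstacle here: the only non-mechanical ingredient is recognising that $\sum q^{2T_j}=\psi(q^2)$ has the closed form $(q^4;q^4)_{\infty}^2/(q^2;q^2)_{\infty}$, which is immediate from $\eqref{e2.0.3}$. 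Everything else is bookkeeping with infinite products, exactly in the spirit of the preceding proofs in the section.
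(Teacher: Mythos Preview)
Your proof is correct and is essentially the same as the paper's: both arrive at the generating-function identity
\[
\left(\sum_{n\ge0}p(n)q^n\right)\left(\sum_{j\ge0}q^{2T_j}\right)=\left(\sum_{n\ge0}pod(n)q^n\right)\left(\sum_{n\ge0}\overline{\mathcal{E}\mathcal{O}}(n)q^n\right)
\]
by routine manipulation of the infinite products and then equate coefficients. The only cosmetic difference is that the paper starts from the $\overline{\mathcal{E}\mathcal{O}}$ side and multiplies by $1/(q;q)_\infty$, whereas you start from $p(n)\cdot\psi(q^2)$ and factor; the underlying algebra is identical.
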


\begin{proof}
    We can write
    \begin{equation*}
        \sum_{n=0}^{\infty}\overline{\mathcal{E}\mathcal{O}}(n)q^n  \frac{(q^2;q^2)_{\infty}}{(q^4;q^4)_{\infty}} =\frac{(q^4;q^4)_{\infty}^2}{(q^2;q^2)_{\infty}}.
    \end{equation*}
Multiplying the above equation by $1/(q;q)_{\infty}$, to get
    \begin{equation*}
          \sum_{n=0}^{\infty}\overline{\mathcal{E}\mathcal{O}}(n)q^n  \frac{(q^2;q^2)_{\infty}}{(q^4;q^4)_{\infty} (q;q)_{\infty}}=\frac{(q^4;q^4)_{\infty}^2}{(q^2;q^2)_{\infty}(q;q)_{\infty}},
     \end{equation*}
which is equivalent to
    \begin{equation*}
        \left( \sum_{n=0}^{\infty}\overline{\mathcal{E}\mathcal{O}}(n)q^n \right) \left( \sum_{n=0}^{\infty}pod(n)q^n \right)  = \left( \sum_{n=0}^{\infty}p(n)q^n \right) \left( \sum_{n=0}^{\infty}q^{2T_n)} \right).
    \end{equation*}
Finally, equating the coefficient of $q^n$ on each side gives the result.    
\end{proof}

\subsection{Relation between the partition functions $pod(n)$ and $p_3(n)$}

In $2018$, Hirschhorn \cite{5} studied the number of partitions of $n$ in three colors, $p_3(n)$, given by the generating function
\begin{equation*}
    \sum_{n=0}^{\infty}p_3(n)q^n = \frac{1}{(q;q)_{\infty}^3}.
\end{equation*}

In this section, we shall prove a relation between partition function $pod(n)$ and partitions in three colors $p_3(n)$.

\begin{theorem}
     For any nonnegative integer $n$, the partition functions $pod(n)$ and $p_3(n)$ are related by
     \begin{equation}\label{e4.1.0.3}
         \sum_{j=0}^{\infty}pod(j)pod(n-j) = \sum_{j=-\infty}^{\infty}p_3\left( \frac{n}{2}-\frac{j^2}{2} \right),
     \end{equation}
with $p_3(x)=0$ if $x$ is not an integer. 
\end{theorem}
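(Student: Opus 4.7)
The plan is to square the generating function of $pod(n)$, reshape the resulting infinite product using a Ramanujan theta-function identity already recorded in Section~2, and then read off the coefficient of $q^n$ on both sides via Cauchy multiplication.

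First, I would observe that
$$\left(\sum_{n=0}^{\infty} pod(n)\, q^n\right)^2 = \frac{(-q;q^2)_\infty^2}{(q^2;q^2)_\infty^2},$$
and invoke identity \eqref{e2.0.2}, namely $\varphi(q)=(-q;q^2)_\infty^2(q^2;q^2)_\infty$, to rewrite the right-hand side as $\varphi(q)/(q^2;q^2)_\infty^3$. This is the key algebraic step: it converts the product of two $pod$-generating functions into a theta series times a three-coloured partition generating function.

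Next, I would expand each factor as a $q$-series using the tools already available: the sum-of-squares representation $\varphi(q)=\sum_{j=-\infty}^{\infty}q^{j^2}$ from \eqref{e2.0.2}, and the identity $(q^2;q^2)_\infty^{-3}=\sum_{n\geq 0}p_3(n)q^{2n}$ coming from the definition of $p_3$ together with the substitution $q\mapsto q^2$. Combining these yields
$$\left(\sum_{n=0}^{\infty} pod(n)\, q^n\right)\left(\sum_{n=0}^{\infty} pod(n)\, q^n\right) = \left(\sum_{j=-\infty}^{\infty} q^{j^2}\right)\left(\sum_{n=0}^{\infty} p_3(n)\, q^{2n}\right),$$
and Cauchy multiplication on both sides followed by equating the coefficient of $q^n$ delivers \eqref{e4.1.0.3}.

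I do not expect any genuine obstacle, since this proof follows exactly the template used throughout Section~4 of the paper. The only bookkeeping point is confirming that the convention $p_3(x)=0$ for $x\notin\mathbb{Z}$ correctly encodes the parity constraint $n\equiv j^2\pmod 2$ implicit in matching the odd exponents $j^2$ on the right with the even exponents $2n$ from $(q^2;q^2)_\infty^{-3}$; these two vanishing conventions together produce precisely the summand $p_3\bigl(n/2-j^2/2\bigr)$ in the stated formula.
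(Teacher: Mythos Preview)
Your proposal is correct and follows essentially the same approach as the paper: both arguments reduce to the identity $\bigl(\sum pod(n)q^n\bigr)^2=\varphi(q)/(q^2;q^2)_\infty^3$ obtained from \eqref{e2.0.2}, and then equate coefficients after expanding $\varphi(q)$ and $(q^2;q^2)_\infty^{-3}$. The only cosmetic difference is that you use the product form $\varphi(q)=(-q;q^2)_\infty^2(q^2;q^2)_\infty$ together with the $pod$ generating function $(-q;q^2)_\infty/(q^2;q^2)_\infty$, whereas the paper uses the alternate form $\varphi(q)=(q^2;q^2)_\infty^5/\bigl((q;q)_\infty^2(q^4;q^4)_\infty^2\bigr)$ together with $(q^2;q^2)_\infty/\bigl((q;q)_\infty(q^4;q^4)_\infty\bigr)$.
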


\begin{proof}
From $\eqref{e2.0.2}$, we have the relation
\begin{equation*}
\sum_{n=-\infty}^{\infty}q^{n^2}=\frac{(q^2;q^2)_{\infty}^5}{(q;q)_{\infty}^2(q^4;q^4)_{\infty}^2},
\end{equation*}
which can be written as
\begin{equation*}
    \sum_{n=-\infty}^{\infty}q^{n^2}  = \left( \frac{(q^2;q^2)_{\infty}}{(q;q)_{\infty}(q^4;q^4)_{\infty}} \right)^2 (q^2;q^2)_{\infty}^3.
\end{equation*}
Multiplying the above equation by $1/(q^2;q^2)_{\infty}^3$, to obtain
\begin{equation*}
    \frac{1}{(q^2;q^2)_{\infty}^3}\sum_{n=-\infty}^{\infty}q^{n^2}  = \left( \sum_{n=0}^{\infty}pod(n)q^n \right)^2
\end{equation*}
which is equivalent to
\begin{equation*}
    \left( \sum_{n=0}^{\infty}pod(n)q^n \right)^2  = \left( \sum_{n=0}^{\infty}p_3(n)q^{2n} \right) \left( \sum_{n=-\infty}^{\infty}q^{n^2} \right).
\end{equation*}
Finally, equating the coefficient of $q^n$ on each side gives the result.
\end{proof}

\begin{corollary}
    For any nonnegative integer $n$, we have
    \begin{equation*}
        p_3\left(\frac{n}{2}\right) \equiv \sum_{j=0}^{\infty}pod(j)pod(n-j) \pmod{2},
    \end{equation*}
    with $p_3(x)=0$ if $x$ is not an integer. 
\end{corollary}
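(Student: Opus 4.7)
The plan is to derive this corollary directly from the theorem immediately preceding it, namely equation \eqref{e4.1.0.3}, by the same symmetry/halving trick used earlier in the paper (for instance, in the two corollaries deducing $pod(n) \equiv a(n) \pmod 2$ and $pod(n) \equiv q_{odd}^{'''}(n) \pmod 2$).

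First I would start from the identity
\begin{equation*}
    \sum_{j=0}^{\infty}pod(j)pod(n-j) = \sum_{j=-\infty}^{\infty}p_3\!\left( \frac{n}{2}-\frac{j^2}{2} \right),
\end{equation*}
and observe that the summand on the right-hand side depends on $j$ only through $j^2$. Therefore the terms at index $j$ and index $-j$ are equal, and pairing them allows me to split the sum into the central term $j=0$ and twice the sum over $j \geq 1$:
\begin{equation*}
    \sum_{j=-\infty}^{\infty}p_3\!\left( \frac{n}{2}-\frac{j^2}{2} \right)
    = p_3\!\left(\frac{n}{2}\right) + 2\sum_{j=1}^{\infty}p_3\!\left( \frac{n}{2}-\frac{j^2}{2} \right).
\end{equation*}

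Reducing modulo $2$, the second summand vanishes and I obtain
\begin{equation*}
    \sum_{j=0}^{\infty}pod(j)pod(n-j) \equiv p_3\!\left(\frac{n}{2}\right) \pmod{2},
\end{equation*}
which is exactly the claimed congruence (with the convention $p_3(x)=0$ for $x$ not an integer, carried over from the theorem). There is no real obstacle here; the only thing to note carefully is the symmetry $j \leftrightarrow -j$ in the index set $\mathbb{Z}$, which cleanly separates the unique unpaired term at $j=0$ from the paired terms contributing an even quantity.
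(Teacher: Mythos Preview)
Your proof is correct and matches the paper's own argument essentially line for line: the paper also rewrites \eqref{e4.1.0.3} as $p_3(n/2) + 2\sum_{j\ge 1} p_3\!\left(\tfrac{n}{2}-\tfrac{j^2}{2}\right)$ via the $j\leftrightarrow -j$ symmetry and then reduces modulo~$2$. There is nothing to add.
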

\begin{proof}
    Equation $\eqref{e4.1.0.3}$ can be rewritten as
    \begin{equation*}
       \sum_{j=0}^{\infty}pod(j)pod(n-j)= p_3\left(\frac{n}{2}\right)  +  2\sum_{j=1}^{\infty}p_3\left( \frac{n}{2}-\frac{j^2}{2} \right).
    \end{equation*}
which under modulo $2$, gives the result.
\end{proof}

\section{Acknowledgement} 
The author wants to thank Prof. N.D. Baruah for taking the time to check this paper. His input and suggestions were really helpful in making this paper better.

\end{document}